\theoremstyle{plain}
\newtheorem{thm}{Theorem}[section]
\newtheorem{lem}[thm]{Lemma}
\newtheorem{cor}[thm]{Corollary}
\newtheorem{eg}[thm]{Example}
\theoremstyle{definition}
\newtheorem{defn}[thm]{Definition}
\theoremstyle{remark}
\newtheorem{rmk}[thm]{Remark}
\newcommand{\bC}{\mathbb{C}}
\newcommand{\bR}{\mathbb{R}}
\newcommand{\la}{\langle}
\newcommand{\ra}{\rangle}
\begin{document}

\title[$p$-nuclearity of $L^p$-operator crossed products]{$p$-nuclearity of $L^p$-operator crossed products}

\author[Z. Wang]{Zhen Wang}
\curraddr{Department of Mathematics\\Hangzhou Normal University\\Hangzhou 311121\\P.~R. China}
\address{Department of Mathematics\\Jilin University\\Changchun 130012\\P.~R. China}
\email{wangzhen@hznu.edu.cn}

\author[S. Zhu]{Sen Zhu}
\address{Department of Mathematics\\Jilin University\\Changchun 130012\\P.~R. China}
\email{zhusen@jlu.edu.cn}

\subjclass[2010]{Primary 47L65, 43A07; Secondary 46L07, 43A15}
\keywords{$L^p$-operator algebras, $L^p$-operator crossed products, nuclearity, $L^p$-Cuntz algebras}

\begin{abstract}
Let $(X,\mathcal{B},\mu)$ be a measure space and $A$ be a norm closed subalgebra of $\mathcal{B}(L^p(X,\mu))$, where $p\in [1,\infty)$.
Let $(G,A,\alpha)$ be an $L^p$-operator algebra dynamical system, where $G$ is a countable discrete amenable group. We prove that the full $L^p$-operator crossed product $F^p(G,A,\alpha)$ is $p$-nuclear if and only if $A$ is $p$-nuclear {provided the action} $\alpha$ of $G$ on $A$ is $p$-completely isometric. As applications, we prove that $L^p$-Cuntz algebras and rotation $L^p$-operator algebras are $p$-nuclear. Our results solve { a problem raised by N. C. Phillips concerning {$p$-nuclearity} for $L^p$-Cuntz algebras.}
\end{abstract}

\date{\today}
\maketitle


\section{Introduction}

The aim of this paper is to study { $p$-nuclearity} for $L^p$-operator crossed products. To proceed, we give a brief introduction to $L^p$-operator algebras.

\subsection{$L^p$-operator algebras}
For $p\in[1,\infty)$, we say that a Banach algebra $A$ is an {\it $L^{p}$-operator algebra} if
it is isometrically isomorphic to a norm closed subalgebra of the algebra $\mathcal{B}(E)$ of all bounded operators on some $L^p$-space $E$.
Clearly, $L^p$-operator algebras {are} a natural generalization of
operator algebras on Hilbert spaces (and in particular $C^*$-algebras) by replacing Hilbert spaces with $L^p$-spaces.

The study of $L^p$-operator algebras can be traced back to C. Herz's influential works \cite{Herz,Herz1,Herz2} on harmonic analysis of group algebras on $L^p$-spaces in 1970s.
Given a locally compact group $G$, C. Herz \cite{Herz} introduced the Banach algebra $F^p_\lambda(G)$ (originally denoted by $PF_p(G)$),
constructed from the left regular representation of $G$ on $L^p(G)$, which is an $L^p$-analogue of the reduced group $C^*$-algebra $C^*_\lambda(G)$. Associated with $F^p_\lambda(G)$ there are two other natural algebras, { namely} the weak-$*$ completion $PM_p(G)$ of $F^p_\lambda(G)$ and the double commutant $CV_p(G)$ of $F^p_\lambda(G)$ {in $\mathcal{B}(L^p(G))$}. The reader is referred to \cite{Daws,Derighetti} for more research
concerning them, especially on the problem of determining whether $PM_p(G)=CV_p(G)$.

The last decade witnessed a revived interest in $L^p$-operator algebras, mainly inspired by the works of
N. C. Phillips \cite{Phillips Lp Cuntz,Odp,Lp UHF,N. C. Phillips Lp} which provide new ideas, examples and techniques from operator algebras.
In particular, he introduced $L^{p}$-Cuntz algebras, and showed that these Banach algebras
behave in many ways very similarly to the corresponding $C^*$-algebras.
In order to compute the $K$-theory groups of $L^{p}$-Cuntz algebras, N. C. Phillips \cite{N. C. Phillips Lp} introduced $L^p$-operator crossed products and proved that the $K$-theory groups of $L^{p}$-Cuntz algebras are independent of the choice of $p\in [1,\infty)$.

The work of N. C. Phillips has motivated other authors to introduce $L^p$-analogs of classical operator algebras on Hilbert spaces
and study whether one can establish $L^p$-analogues of important results from the theory of $C^*$-algebras. Along this line, group algebras \cite{Gardella and Thiel,Gardella and Thiel iso}, crossed products \cite{Gardella and Thiel convolution,Hejazian and Pooya,WZ}, groupoid algebras \cite{Choi and,Gardella and Lupini groupoid}, AF algebras \cite{Gardella and Lupini groupoid UHF,Lp AF}, graph algebras \cite{Cort}, the $L^p$-Roe type algebras \cite{Braga,Chung lp Roe 1,Chung lp Roe,Li Kang,shan,S and Z,zhang} and the $l^p$-Toeplitz algebra \cite{WW} are studied. Although most previous investigations have been very largely focused on various examples,
some recent works were undertaken in a more abstract and systematic way \cite{Blecher and Phillips,Choi and,Gardella and Thiel isometry,Gardella and Thiel bidual}, indicating that there is a rich general theory waiting to be discovered.

Recently there has been growing interest in $L^p$-operator algebras.
The influx of operator-algebraic techniques leads to progresses on some long standing questions. For example, it was shown in \cite{Gardella and Thiel quotient} that the class of $L^p$-operator algebras ($p\ne 2$) is not closed under quotients, answering an old open question asked by Le Merdy \cite{Merdy}.
In \cite{Chung lp Roe 1}, Y. C. Chung and K. Li showed that coarse equivalence is implied by an isometric isomorphism of $l^p$-Roe-type algebras without the assumption of Yu's property A. This exhibits an interesting interplay between coarse geometry and $L^p$-operator algebras.
The reader is referred to \cite{Gardella modern} for more historical comments and recent developments in $L^p$-operator algebras.

$L^p$-operator crossed products were introduced by N. C. Phillips \cite{N. C. Phillips Lp} with the aim to compute the $K$-theory groups of $L^{p}$-Cuntz algebras. The stabilized $L^{p}$-Cuntz algebras can be realized as the reduced $L^p$-operator crossed products by $\mathbb{Z}$ (see \cite[Theorem 7.17]{N. C. Phillips Lp}).
Next we recall some notations and terminology.

\subsection{$L^p$-operator crossed products}\label{cross}
Throughout the following, discrete groups will always be implicitly endowed with the counting measure.
We say that an isometric representation $\pi$ of an $L^p$-operator algebra $A$ on $\mathcal{B}(L^p(X,\mu))$ is {\it $\sigma$-finite} if $\mu$ is $\sigma$-finite, and $\pi$ is {\it separable} if $L^p(X,\mu)$ is separable.

An $L^{p}$-{\it operator algebra dynamical system} is a triple $(G,A,\alpha)$ consisting of a discrete group $G$, an $L^{p}$-operator algebra $A$ and a continuous homomorphism $\alpha:G\rightarrow \mathrm{Aut}(A)$, where $\mathrm{Aut}(A)$ is the group of isometric automorphisms of $A$.
A {\it contractive covariant representation} of $(G,A,\alpha)$ on an $L^{p}$-space $E$ is a pair $(\pi,v)$ consisting of a nondegenerate contractive homomorphism $\pi:A\rightarrow \mathcal{B}(E)$ and an isometric group representation $v:G\rightarrow \mathcal{B}(E)$ satisfying the covariance condition $$v_{t}\pi(a)v_{t^{-1}}=\pi(\alpha_{t}(a)), \ \ t\in G, a\in A.$$
A covariant representation $(\pi,v)$ of $(G,A,\alpha)$ is said to be {\it $\sigma$-finite} if $\pi$ is $\sigma$-finite.

We denote by $C_{c}(G,A,\alpha)$ the vector space of compactly supported continuous functions from $G$ to $A$, made into an algebra over $\mathbb{C}$ with product given by twisted convolution, that is, $$(f*g)(t):=\sum_{s\in G}f(s)\alpha_{s}\left(g(s^{-1}t)\right)$$ for $f,g\in C_{c}(G,A,\alpha)$ and $t\in G$. The {\it integrated form} of $(\pi,v)$ is the nondegenerate contractive homomorphism $\pi\rtimes v: C_{c}(G,A,\alpha)\rightarrow \mathcal{B}(E)$ given by $$(\pi\rtimes v)(f)(\xi):=\sum_{t\in G}\pi\left(f(t)\right)v_{t}(\xi)$$ for $f\in C_{c}(G,A,\alpha)$ and $\xi\in E$.

We let $\mathrm{Rep}_{p}(G,A,\alpha)$ denote the class of all nondegenerate $\sigma$-finite contractive covariant representations of $(G,A,\alpha)$ on $L^{p}$-spaces. The {\it full $L^p$-operator crossed product} $F^{p}(G,A,\alpha)$ is defined as the completion of $C_{c}(G,A,\alpha)$ in the norm $$||f||_{F^{p}(G,A,\alpha)}:=\sup\big\{||(\pi\rtimes v)(f)||:(\pi,v)\in \mathrm{Rep}_{p}(G,A,\alpha)\big\}.$$

Given two measure spaces $(X,\mu)$ and $(Y,\nu)$, we denote by $L^p(X,\mu)\otimes_p L^p(Y,\nu)$ the $L^p$-tensor product, which can be canonical identified with $L^p(X\times Y,\mu\times\nu)$ via $\xi\otimes\eta(x,y)=\xi(x)\eta(y)$ for $\xi\in L^p(X,\mu)$ and $\eta\in L^p(Y,\nu)$.

Let $(G,A,\alpha)$ be an $L^{p}$-operator algebra dynamical system.
Given a nondegenerate $\sigma$-finite contractive representation $\pi_{0}:A\rightarrow \mathcal{B}(E_{0})$ on an $L^{p}$-space $E_{0}$, its associated {\it regular covariant representation} is the pair $(\pi,\lambda_{p}^{E_{0}})$ on $l^{p}(G)\otimes_{p} E_{0}\cong l^{p}(G,E_{0})$ given by
$$\pi(a)(\xi)(s):=\pi_{0}\left(\alpha_{s^{-1}}(a)\right)(\xi(s)) \quad$$ and $$\lambda_p^{E_0}(s)(\xi)(t):=\xi(s^{-1}t)$$
for $a\in A$, $\xi\in l^{p}(G,E_{0})$ and $s,t\in G$.
We denote by $\mathrm{RegRep}_{p}(G,A,\alpha)$ the class consisting
of nondegenerate $\sigma$-finite contractive regular covariant representations of $(G,A,\alpha)$, which is clearly a subclass of $\mathrm{Rep}_{p}(G,A,\alpha)$. The {\it reduced $L^p$-operator crossed product} $F^{p}_{\lambda}(G,A,\alpha)$ is defined as the completion of $C_{c}(G,A,\alpha)$ in the norm $$||f||_{F^{p}_{\lambda}(G,A,\alpha)}:=\sup\{||(\pi\rtimes v)(f)||:(\pi,v)\in \mathrm{RegRep}_{p}(G,A,\alpha)\}.$$

By \cite[Theorem 7.1]{Phillips look like}, if $G$ is amenable, then  $F^{p}(G,A,\alpha)$ is isometrically isomorphic to $F^{p}_{\lambda}(G,A,\alpha)$. If $A=\mathbb{C}$, then it is easy to see that $F^{p}(G,A,\mathrm{id})$ is the full group $L^{p}$-operator algebra $F^{p}(G)$ and $F^{p}_{\lambda}(G,A,\mathrm{id})$ is the reduced group $L^{p}$-operator algebra $F^{p}_{\lambda}(G)$,
where $\mathrm{id}$ is the trivial action of $G$ on $\mathbb{C}$. When $A=C(X)$ for some compact Hausdorff space $X$, we simply write $F^p(G,X,\alpha)$ for $F^p(G,C(X),\alpha)$ and write $F^p_\lambda(G,X,\alpha)$ for $F^p_\lambda(G,C(X),\alpha)$.

\subsection{$p$-nuclearity}
The aim of the present paper is to study $p$-nuclearity of $L^p$-operator crossed products.
We first recall the nuclear property of $C^*$-algebras, which was introduced by Takesaki \cite{Takesaki}.

A $C^*$-algebra $A$ is said to be {\it nuclear} if for any $C^*$-algebra $B$ there is a unique norm on the algebraic tensor product $A\odot B$.
By influential works of Lance \cite{Lance}, Choi-Effros \cite{Choi and Effros} and Kirchberg \cite{Kirchberg}, the nuclearity is equivalent to {\it completely positive approximation property}, that is, for any finite subset $F$ of $A$ and $\varepsilon>0$, there exist a positive integer $n$ and two completely contractive positive maps $\varphi:A\rightarrow M_n$ and $\psi:M_n\rightarrow A$ such that $\|\psi\circ\varphi(a)-a\|<\varepsilon$ for all $a\in F$. Also it is well known that the nuclearity is equivalent to the amenability for $C^*$-algebras \cite{Connes,Haagerup}, which was originally introduced by B. E. Johnson \cite{Johnsom} for Banach algebras.

In \cite{an}, G. An, J.-J. Lee and Z.-J. Ruan introduced and studied $p$-nuclearity for reduced group $L^p$-operator algebras $F^p_\lambda(G)$.
To formulate their result, we first recall some notions.

For each positive integer $n$, {denote $l_n^p=L^p(\{1,2,\cdots,n\},\nu)$, where $\nu$ is the counting measure on $\{1,2,\cdots,n\}$.
We denote $M_n^p=\mathcal{B}(l_n^p)$}.
Given a norm closed subalgebra $A$ of $\mathcal{B}(L^p(X,\mu))$, we denote by $M_n^p\otimes_p A$ the {\it $L^p$-matrix algebra}, that is, the Banach subalgebra of $\mathcal{B}(L^p(\{1,2,\cdots,n\}\times X,\nu\times\mu))$ generated by all $T\otimes a$'s for $T\in M_n^p$ and $a\in \mathcal{B}(L^p(X,\mu))$. Clearly, each element of $M_n^p\otimes_p A$ is of form $[a_{i,j}]_{1\leq i,j\leq n}$ with $a_{i,j}\in A$,
which is also written as $\sum_{i,j=1}^n e_{i,j}\otimes a_{i,j}$, where $\{e_{i,j}\}_{1\leq i,j\leq n}$ are the canonical matrix units of $M_n^p$.

\begin{defn}
Let $A$ be a norm closed subalgebra of $\mathcal{B}(L^p(X,\mu))$, $B$ be a norm closed subalgebra of $\mathcal{B}(L^p(Y,\nu))$
and $\varphi$ be a linear map $\varphi: A\rightarrow B$.  We denote by $\mathrm{id}_{M_n^p}\otimes \varphi$ the map from $M_n^p\otimes_p A$ to $M_n^p\otimes_p B$ defined by $$\mathrm{id}_{M_n^p}\otimes \varphi\left(\sum_{i,j=1}^n e_{i,j}\otimes a_{i,j}\right)=\sum_{i,j=1}^n e_{i,j}\otimes \varphi(a_{i,j}) $$ for
$\sum_{i,j=1}^n e_{i,j}\otimes a_{i,j}\in M_n^p\otimes_p A$.
We denote $\|\varphi\|_{cb}=\sup_{n\in \mathbb{Z}_{>0}}\|\mathrm{id}_{M_n^p}\otimes \varphi\|$.
We say that $\varphi$ is {\it $p$-completely contractive} if $\|\varphi\|_{cb}\leq 1$, and say that $\varphi$ is {\it $p$-completely isometric} if $\mathrm{id}_{M_n^p}\otimes \varphi$ is isometric for all positive integer $n$.
\end{defn}

\begin{defn}[{\cite[Proposition 5.1 (a)]{an}}]
Let $(X,\mathcal{B},\mu)$ be a measure space and $A\subset \mathcal{B}(L^p(X,\mu))$ be a norm closed subalgebra.
We say that $A$ is $p$-nuclear if, for every finite subset $F$ of $A$ and every $\varepsilon>0$, there exist a positive integer $n$ and two $p$-completely contractive maps $\varphi:A\rightarrow M_n^p$ and $\psi:M_n^p\rightarrow A$ such that $\|\psi\circ \varphi(a)-a\|<\varepsilon$ for all $a\in F$.
\end{defn}

Given a discrete amenable group $G$, it was proved in \cite{an} that $F^p_\lambda(G)$ is $p$-nuclear.
When $A$ is a $C^*$-algebra, R. R. Smith proved that $2$-nuclearity is equivalent to nuclearity (see \cite[Theorem 1.1]{Smith}).
Moreover, if $G$ is a discrete amenable group and $\alpha:G\rightarrow \mathrm{Aut}(A)$ is an isometric action of $G$ on {a $C^*$-algebra} $A$, then, by
\cite[Theorem 1]{Rosenberg} and \cite[Theorem 4.2.6]{Brown and Ozawa}), the full crossed product $A\rtimes_\alpha G$ is nuclear if and only if $A$ is nuclear. In \cite{PlpsOpenQues}, N. C. Phillips proposed to study whether there is an $L^p$-analogue of the preceding result and raised several problems.

The main result of this paper is the following theorem, which determines when
the full $L^p$-operator crossed product $F^p(G, A, \alpha)$ is $p$-nuclear
in the case that $G$ is a countable discrete amenable group and $\alpha$ is a $p$-completely isometric action. Recall that an action $\alpha:G\rightarrow \mathrm{Aut}(A)$ is said to be {\it $p$-completely isometric} if $\alpha_t:A\rightarrow A$ is $p$-completely isometric for every $t\in G$.

\begin{thm}\label{T:main}
Let $p\in [1,\infty)$ and $(X,\mathcal{B},\mu)$ be a $\sigma$-finite measure space such that $E=L^p(X,\mu)$ is separable.
Let $A\subset\mathcal{B}(E)$ be a norm closed subalgebra such that the identity representation of $A$ on $E$ is nondegenerate, $G$ be a countable discrete amenable group and $\alpha:G\rightarrow \mathrm{Aut}(A)$ be a $p$-completely isometric action of $G$ on $A$. Then $F^p(G,A,\alpha)$ is $p$-nuclear if and only if $A$ is $p$-nuclear.
\end{thm}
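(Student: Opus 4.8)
The plan is to transpose the argument for $C^*$-crossed products over amenable groups (cf.\ \cite[Theorem 4.2.6]{Brown and Ozawa}) to the $L^p$-setting, replacing unital completely positive maps by $p$-completely contractive maps and $M_n$ by $M_n^p$. Throughout we use that, $G$ being amenable, $F^p(G,A,\alpha)=F^p_\lambda(G,A,\alpha)$ by \cite[Theorem 7.1]{Phillips look like}, and that the latter is isometrically represented on $l^p(G)\otimes_p E\cong l^p(G,E)$ by the regular covariant representation $(\pi,\lambda_p^E)$ induced from the identity representation $A\subset\mathcal{B}(E)$ (which is nondegenerate, $\sigma$-finite, separable, and, being the identity, $p$-completely isometric); here $\pi(a)\xi(h)=\alpha_{h^{-1}}(a)\xi(h)$ and $\lambda_p^E(g)\xi(h)=\xi(g^{-1}h)$, so $a\delta_g\mapsto\pi(a)\lambda_p^E(g)$, and $l^p(G,E)$ is again separable and $\sigma$-finite since $G$ is countable.

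\emph{The easy direction.} If $F^p(G,A,\alpha)$ is $p$-nuclear then so is $A$. Indeed $\iota\colon A\to F^p(G,A,\alpha)$, $a\mapsto a\delta_e$, is $p$-completely isometric (in the regular representation $\pi(a)\lambda_p^E(e)=\bigoplus_{h\in G}\alpha_{h^{-1}}(a)$, whose amplifications have norm $\|a\|$ because $\alpha$ is $p$-completely isometric), and it has a $p$-completely contractive left inverse $E\colon F^p(G,A,\alpha)\to A$ given by compressing $\mathcal{B}(l^p(G,E))$ onto the summand $E=l^p(\{e\},E)$ (compression by the contractions $l^p(G)\to l^p(\{e\})$ and back; one checks $E(a\delta_e)=a$). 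Given a finite $F\subset A$ and $\varepsilon>0$, apply $p$-nuclearity of $F^p(G,A,\alpha)$ to $\iota(F)$ to get $p$-completely contractive $\varphi\colon F^p(G,A,\alpha)\to M_n^p$ and $\psi\colon M_n^p\to F^p(G,A,\alpha)$ with $\|\psi\varphi(\iota(a))-\iota(a)\|<\varepsilon$; then $\varphi\circ\iota$ and $E\circ\psi$ witness $p$-nuclearity of $A$ on $F$.

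\emph{The hard direction.} Assume $A$ is $p$-nuclear. For a finite $S\subset G$ let $P_S\colon l^p(G)\to l^p(S)$ and $J_S\colon l^p(S)\to l^p(G)$ be restriction and inclusion (contractions with $P_SJ_S=\mathrm{id}$), and put $\Phi_S(x)=(P_S\otimes\mathrm{id}_E)\,x\,(J_S\otimes\mathrm{id}_E)$, a $p$-completely contractive map (compression by contractions). A direct computation on $x=a\delta_g$ (the $(s,t)$-block of $\pi(a)\lambda_p^E(g)$ equals $\alpha_{s^{-1}}(a)$ if $t=g^{-1}s$ and $0$ otherwise) shows $\Phi_S(x)$ has all entries in $A$, so, $M_S^p(A):=M_{|S|}^p\otimes_p A$ being closed, $\Phi_S\colon F^p_\lambda(G,A,\alpha)\to M_S^p(A)$, with $\Phi_S(a\delta_g)=\sum_{s\in S\cap gS}e_{s,g^{-1}s}\otimes\alpha_{s^{-1}}(a)$. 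Define $\Psi_S\colon M_S^p(A)\to F^p_\lambda(G,A,\alpha)$ by
$$\Psi_S\big([b_{s,t}]_{s,t\in S}\big)=\frac{1}{|S|}\sum_{s,t\in S}\alpha_s(b_{s,t})\,\delta_{st^{-1}}\in C_c(G,A,\alpha).$$
Granting that $\Psi_S$ is $p$-completely contractive (see below), one computes $\Psi_S\Phi_S(a\delta_g)=\tfrac{|S\cap gS|}{|S|}a\delta_g$; choosing $S$ along a Følner sequence of $G$ and using uniform contractivity and density of $C_c(G,A,\alpha)$, we get $\Psi_S\circ\Phi_S\to\mathrm{id}$ pointwise on $F^p_\lambda(G,A,\alpha)$. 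Moreover $M_S^p(A)$ is $p$-nuclear: $p$-complete contractivity of a map $\theta$ forces it on $\mathrm{id}_{M_{|S|}^p}\otimes\theta$ (via $M_k^p\otimes_pM_{|S|}^p\cong M_{k|S|}^p$), and entrywise estimates upgrade the approximation of $\mathrm{id}_A$ to one of $\mathrm{id}_{M_S^p(A)}$ on finite subsets. Now, given a finite $F\subset C_c(G,A,\alpha)$ and $\varepsilon>0$, pick $S$ so large that $\tfrac{|S\cap gS|}{|S|}$ is close to $1$ for every $g$ in the finite set of supports of $F$, apply $p$-nuclearity of $M_S^p(A)$ to $\Phi_S(F)$ to obtain $p$-completely contractive $\varphi'\colon M_S^p(A)\to M_k^p$ and $\psi'\colon M_k^p\to M_S^p(A)$ with $\psi'\varphi'$ close to $\mathrm{id}$ there; then $\varphi'\circ\Phi_S$ and $\Psi_S\circ\psi'$ witness $p$-nuclearity of $F^p_\lambda(G,A,\alpha)=F^p(G,A,\alpha)$ on $F$, and $C_c(G,A,\alpha)$ is dense.

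\emph{The main obstacle: $p$-complete contractivity of $\Psi_S$.} In the $C^*$-setting this is delivered by complete positivity and a Stinespring-type factorization $y\mapsto V^*yV$ with an isometry $V$; with no positivity available the point is to build an explicit $L^p$-factorization. Using the covariance identity $\pi(\alpha_s(b))\lambda_p^E(st^{-1})=\lambda_p^E(s)\pi(b)\lambda_p^E(t^{-1})$ one rewrites
$$\Psi_S\big([b_{s,t}]\big)=\frac{1}{|S|}\sum_{s,t\in S}\lambda_p^E(s)\,\pi(b_{s,t})\,\lambda_p^E(t^{-1}).$$
Let $\Pi_S:=\mathrm{id}_{M_{|S|}^p}\otimes\pi\colon M_S^p(A)\to\mathcal{B}\big(l^p(S\times G,E)\big)$ send $[b_{s,t}]$ to the block operator with $(s,t)$-block $\pi(b_{s,t})$; since $\pi(a)=\bigoplus_{h\in G}\alpha_{h^{-1}}(a)$ and $\alpha$ is $p$-completely isometric, $\pi$, hence $\Pi_S$, is $p$-completely isometric. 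Let $\mathbf C_S\colon l^p(G,E)\to l^p(S\times G,E)$, $\zeta\mapsto|S|^{-1/p}\big(\lambda_p^E(t^{-1})\zeta\big)_{t\in S}$ (an isometry), and $\mathbf R_S\colon l^p(S\times G,E)\to l^p(G,E)$, $(\zeta_s)_s\mapsto|S|^{-1/p'}\sum_{s\in S}\lambda_p^E(s)\zeta_s$, which is a contraction by Hölder's inequality with $\tfrac1p+\tfrac1{p'}=1$. A short computation gives $\Psi_S(y)=\mathbf R_S\,\Pi_S(y)\,\mathbf C_S$ for all $y$; as $T\mapsto\mathbf R_S T\mathbf C_S$ is a compression by contractions it is $p$-completely contractive, hence so is $\Psi_S=(T\mapsto\mathbf R_S T\mathbf C_S)\circ\Pi_S$. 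The balance $|S|^{-1/p}\cdot|S|^{-1/p'}=|S|^{-1}$ is the $L^p$-replacement for the $C^*$-normalization (both sides carrying $|S|^{-1/2}$), and this step — together with the $p$-complete isometry of $\pi$ — is precisely where the hypothesis that $\alpha$ be $p$-completely isometric is used essentially.
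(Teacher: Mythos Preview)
Your proposal is correct and follows essentially the same route as the paper: reduce to $F^p_\lambda$ via amenability, realize it on $l^p(G,E)$ via the regular covariant representation, prove the easy direction using the inclusion $A\hookrightarrow F^p_\lambda$ and the conditional expectation (both handled exactly as you do), and for the hard direction build the compression map $\Phi_S$ onto $M_S^p\otimes_pA$ and the averaging map $\Psi_S$ back, choose $S$ F\o lner, and finish with the $p$-nuclearity of $M_S^p\otimes_pA$.

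The one place you diverge slightly is the verification that $\Psi_S$ is $p$-completely contractive. The paper argues by duality: it fixes $\xi,\eta$, introduces the rescaled vectors $\widetilde{\xi}_{i,r}=|F|^{-1/p}\lambda_p^E(r^{-1})\xi_i$ and $\widetilde{\eta}_{i,r}=|F|^{-1/q}\lambda_q^{E^*}(r^{-1})\eta_i$, and computes $\langle(\mathrm{id}_{M_k^p}\otimes\psi)(T)\xi,\eta\rangle$ as a pairing against $\widetilde{\xi},\widetilde{\eta}$, which requires the auxiliary adjoint computation $\lambda_p^E(s)^*=\lambda_q^{E^*}(s^{-1})$ (their Lemma~\ref{adjoint}). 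Your spatial factorization $\Psi_S(y)=\mathbf{R}_S\,\Pi_S(y)\,\mathbf{C}_S$ with $\mathbf{C}_S$ isometric and $\mathbf{R}_S$ contractive (H\"older) is the operator-theoretic form of the same computation---indeed $\mathbf{C}_S\xi$ is precisely the paper's $\widetilde{\xi}$ and $\mathbf{R}_S^*\eta$ is the paper's $\widetilde{\eta}$---but it packages the estimate more cleanly and makes the adjoint lemma unnecessary. Both arguments use the $p$-complete isometry of $\alpha$ at exactly the same point, namely to pass from $\|\Pi_S(y)\|$ (equivalently $\|\sum e_{i,j}\otimes\sigma_{s,t}\otimes\iota(a^{i,j}_{s,t})\|$) back to $\|y\|$.
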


The preceding result partially solves \cite[Problem 10.4]{PlpsOpenQues} raised by N. C. Phillips.


\begin{rmk}\label{completely isometric}
\begin{enumerate}
\item[(i)] We do not know whether the result of Theorem \ref{T:main} still holds when the condition on $\alpha$ is relaxed to an isometric action.
As we know, it is still unclear whether an isometric action of $G$ is necessarily $p$-completely isometric.
\item[(ii)] Clearly, if $A$ has unique $L^p$-operator matrix norms (see Definition \ref{matrix norm}), then each isometric action $\alpha$ of $A$ is $p$-completely isometric.
Both the stabilized spatial $L^p$-UHF algebra and $C(S^1)$ have unique $L^p$-operator matrix norms (see \cite[Corollary 4.4 \& Propositions 4.6 and 6.4]{Lp AF}), where $S^1$ denotes the unit circle in $\bC$.
\item[(iii)] { The amenability of an $L^p$-operator algebra does not coincide with the $p$-nuclearity. In fact, by \cite[Theorem 6.4]{an}, \cite[Proposition 2.11, Remark 2.12]{Gardella and Thiel} and \cite[Theorem 2.1.10]{Runde}, if $G$ is a non-amenable discrete group, then $F^1(G)$ and $F^1_\lambda(G)$ are $1$-nuclear but not amenable.
    For an amenable $L^p$-operator algebra which is not $p$-nuclear, see Example \ref{Eg:amena}. Hence the result of Theorem \ref{T:main} does not just follow from the amenability for $L^p$-operator crossed products (see \cite[Theorem 5.2]{N. C. Phillips Lp}).}
\end{enumerate}
\end{rmk}


Next we shall give two applications of Theorem \ref{T:main}.

In \cite[Corollary 5.18]{Odp}, N. C. Phillips proved that $L^p$-Cuntz algebras  $\mathcal{O}_d^p$ are all amenable
(see Definition \ref{D:Cunz} for the precise definition). N. C. Phillips \cite[Theorem 7.17]{N. C. Phillips Lp} proved that the $L^p$-Cuntz algebra $\mathcal{O}_d^p$ is stably isomorphic to the reduced $L^p$-operator crossed product of a stabilized spatial $L^p$-UHF algebra by $\mathbb{Z}$; moreover, he pointed out that the spatial $L^p$-UHF algebra is $p$-nuclear \cite[Theorem 4.6]{Phillips look like}.

Using Theorem \ref{T:main}, we shall prove in Section 3 the following result.

\begin{cor}\label{C:3}
The $L^p$-Cuntz algebras are $p$-nuclear for $p\in [1,\infty)$.
\end{cor}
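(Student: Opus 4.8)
The plan is to reduce the statement to Theorem \ref{T:main} via the structural result of Phillips, namely that the $L^p$-Cuntz algebra $\mathcal{O}_d^p$ is stably isomorphic to the reduced $L^p$-operator crossed product of a stabilized spatial $L^p$-UHF algebra by $\mathbb{Z}$ \cite[Theorem 7.17]{N. C. Phillips Lp}. First I would record the inputs: (a) the stabilized spatial $L^p$-UHF algebra $D$ is $p$-nuclear, by \cite[Theorem 4.6]{Phillips look like}; (b) $D$ has a nondegenerate isometric representation on a separable $L^p$-space (the construction of the spatial $L^p$-UHF algebra is as a direct limit of spatial matrix algebras acting on a separable space, and stabilization preserves separability); and (c) $D$ has unique $L^p$-operator matrix norms, by \cite[Corollary 4.4 \& Proposition 4.6]{Lp AF}, so that by Remark \ref{completely isometric}(ii) the $\mathbb{Z}$-action implementing the crossed-product decomposition is automatically $p$-completely isometric. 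With these in hand, Theorem \ref{T:main} (applied to $G=\mathbb{Z}$, which is countable discrete amenable) yields that $F^p(\mathbb{Z},D,\alpha)$ is $p$-nuclear; since $G$ is amenable, $F^p=F^p_\lambda$ by \cite[Theorem 7.1]{Phillips look like}, so the reduced crossed product in Phillips' theorem is $p$-nuclear.

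Next I would pass from the reduced crossed product $F^p_\lambda(\mathbb{Z},D,\alpha)$ to $\mathcal{O}_d^p$ itself. The two are not isomorphic but only stably isomorphic, i.e. they become isometrically isomorphic after tensoring with $\mathcal{K}^p = \mathcal{B}(l^p(\mathbb{N}))_{\mathrm{cpt}}$ (the $L^p$-analog of the compacts, or an appropriate stabilization). So the remaining step is: $p$-nuclearity is preserved under the stabilization used in \cite[Theorem 7.17]{N. C. Phillips Lp}, and conversely a hereditary/corner argument recovers $p$-nuclearity of $\mathcal{O}_d^p$ from that of $\mathcal{O}_d^p \otimes_p \mathcal{K}^p$. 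Concretely: if $B$ is $p$-nuclear then $M_n^p \otimes_p B$ is $p$-nuclear for each $n$ (compose the approximating maps with the obvious amplifications), and a direct-limit argument gives that $\mathcal{K}^p \otimes_p B$ is $p$-nuclear; and $\mathcal{O}_d^p$ sits as a corner $e(\mathcal{O}_d^p\otimes_p\mathcal{K}^p)e$ for a suitable idempotent $e$, with the compression $x\mapsto exe$ and the inclusion being $p$-completely contractive, so the approximating maps for the big algebra compress to approximating maps for the corner. I should check that the $p$-completely contractive approximation property genuinely behaves well under these corner and direct-limit operations in the $L^p$ category — this is routine for $C^*$-algebras but needs the bookkeeping of $L^p$-matrix norms.

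I expect the main obstacle to be not any deep new idea but making sure the ``folklore'' permanence properties of $p$-nuclearity — stability under passing to $L^p$-matrix amplifications, to direct limits, and to complemented subalgebras arising as corners — are actually available at the level of rigor the paper needs, since the $L^p$-operator category lacks positivity and one cannot invoke the $C^*$-theory verbatim. A secondary point to verify carefully is that Phillips' decomposition in \cite[Theorem 7.17]{N. C. Phillips Lp} produces an action $\alpha$ on a stabilized $L^p$-UHF algebra to which Remark \ref{completely isometric}(ii) applies verbatim; if the stabilization disturbs the ``unique $L^p$-operator matrix norms'' property, one would instead need to argue directly that the specific shift-type automorphism there is $p$-completely isometric (which it is, being spatially implemented). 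Once these permanence facts are in place, the corollary follows by assembling the pieces above.
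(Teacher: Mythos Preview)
Your proposal is correct and follows essentially the same route as the paper: reduce via Phillips' structural theorem \cite[Theorem 7.17]{N. C. Phillips Lp} to a crossed product of a stabilized spatial $L^p$-UHF algebra by $\mathbb{Z}$, apply Theorem \ref{T:main}, and then undo the stabilization. The permanence properties you flag as needing to be checked are exactly the content of Lemma \ref{tensor} in the paper (both directions, for $M_n^p\otimes_p A$ and for $\overline{M}_{\mathbb{Z}_{>0}}^p\otimes_p A$), so your corner/direct-limit bookkeeping is already packaged there; and for the $p$-complete isometry of the action, the paper does not rely on Remark \ref{completely isometric}(ii) but instead verifies it directly from the explicit description of $\beta$ in \cite[Notation 7.6]{N. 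C. Phillips Lp}---precisely the spatially-implemented fallback you mention---while the unique-matrix-norms property is invoked separately to ensure that the identity representation of $B$ induces an isometric representation of the reduced crossed product (so that Theorem \ref{T:main} applies to that concrete realization).
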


This answers \cite[Problem 10.5]{PlpsOpenQues} positively.

{

Finally, we consider $p$-nuclearity of rotation $L^p$-operator algebras $A_\theta^p$ (the precise definition will be provided in Definition \ref{D:rota}), which are essentially $L^p$-operator crossed products. In fact, E. Gardella and H. Thiel \cite{GarThiRotation14} proved that $A_\theta^p$ is isometrically isomorphic to $F^{p}(\mathbb{Z},F^p(\mathbb{Z}), \beta_\theta)$ for some
action $\beta_\theta$ of $\mathbb{Z}$ on $F^p(\mathbb{Z})$ (see the discussion after Definition \ref{D:rota} for more details).

Next we introduce another $L^p$-operator crossed product closely related to $A_\theta^p$.
For $\theta\in \mathbb{R}$, we denote by $\alpha_\theta$ the isometric automorphism of $C(S^1)$ given by  $\alpha_{\theta}(f)(z)=f(e^{-2\pi \textrm{i}\theta} z)$ for $z\in S^1$.
The action of $\mathbb{Z}$ on $C(S^1)$ given by $n\longmapsto \alpha_\theta^n$ is still denoted by $\alpha_{\theta}$. Let $\Gamma: F^p(\mathbb{Z})\rightarrow C(S^1)$ be the Gelfand transform of $F^p(\mathbb{Z})$. The reader will see in Section 3 that $\Gamma$ induces a canonical injective homomorphism from $F^p(\mathbb{Z},F^p(\mathbb{Z}),\beta_\theta)$ to $ F^p(\mathbb{Z},S^1,\alpha_\theta)$, which is surjective precisely when $p=2$.

Applying Theorem \ref{T:main}, we shall prove the following.

{\begin{cor}\label{2}
Let $\theta\in \mathbb{R}$ and $p\in [1,\infty)$. Then both
$F^{p}(\mathbb{Z},S^1,\alpha_\theta)$ and $A_\theta^p$ are $p$-nuclear.
\end{cor}}

The rest of this paper is organized as follows.
In Section 2, we shall provide some auxiliary results and give the proof of Theorem \ref{T:main}. In Section 3, we shall give the proofs of Corollaries \ref{C:3} and \ref{2}.

}

\section{Proof of the main result}

Before proving Theorem \ref{T:main}, we first prove several useful lemmas.

\subsection{Auxiliary results}

{\begin{lem}\label{matrix}

Let $p\in [1,\infty)$, $(X,\mu)$ be a measure space and $A\subset \mathcal{B}(L^p(X,\mu))$ be a norm closed subalgebra. If, for every finite subset $F$ of $A$ and every $\varepsilon>0$, there exist a $p$-nuclear $L^p$-operator algebra $B\subset \mathcal{B}(L^p(Y,\nu))$ with $ (Y,\nu)$ being a measure space, and two $p$-completely contractive maps $\varphi:A\rightarrow B$ and $\psi:B\rightarrow A$ such that $\|\psi\circ \varphi(a)-a\|<\varepsilon$ for all $a\in F$, then $A$ is $p$-nuclear.
\end{lem}

\begin{proof}
Arbitrarily choose a finite subset $F$ of $A$ and an $\varepsilon>0$. By the hypothesis, there exist a $p$-nuclear $L^p$-operator algebra $B$ and two $p$-completely contractive maps $\varphi:A\rightarrow B$ and $\psi:B\rightarrow A$ such that $$\|\psi\circ \varphi(a)-a\|<\frac{\varepsilon}{2}$$ for all $a\in F$. Clearly, $\varphi(F)$ is a finite subset of $B$. Since $B$ is $p$-nuclear, for $\varepsilon>0$ and the finite subset $\varphi(F)$ of $B$, there exist a positive integer $n$ and two $p$-completely contractive maps $\varphi_0:B\rightarrow M_n^p$ and $\psi_0:M_n^p\rightarrow B$ such that $$\|\psi_0\circ\varphi_0(b)-b\|<\frac{\varepsilon}{2}$$ for all $b\in \varphi(F)$. Then $$\|(\psi\circ\psi_0)\circ(\varphi_0\circ \varphi)(a)-a\|\leq\|(\psi\circ\psi_0)\circ(\varphi_0\circ \varphi)(a)-\psi\circ\varphi(a)\|+\|\psi\circ\varphi(a)-a\|<\varepsilon$$ for all $a\in F$. This shows that $A$ is $p$-nuclear.
\end{proof}}

{

By a result of N. C. Phillips and M. G. Viola \cite[Theorem 6.2]{Lp AF}, every direct limit of $L^p$-operator algebras with $p$-completely isometric homomorphisms is still an $L^p$-operator algebra.

\begin{lem}\label{direct}
Assume that $p\in [1,\infty).$
\begin{enumerate}
  \item[(i)] Let $((A_i)_{i\in \mathbb{Z}_{>0}}, (\varphi_{j,i})_{i,j\in \mathbb{Z}_{>0}})$ be a direct system of $L^p$-operator algebras with $p$-completely isometric homomorphisms $\varphi_{j,i}: A_i\rightarrow A_j$ for $i,j\in\mathbb{Z}_{>0}$ with $i\leq j$. Then there exist an $L^p$-operator algebra $A$ and $p$-completely isometric maps $\psi_i: A_i\rightarrow A$ such that $A= \overline{\cup_{i=1}^\infty\psi_i(A_i)}$ and  $\psi_{j}\circ\varphi_{j,i}=\psi_i$ whenever $i\leq j$, that is, $A$ is a direct limit of $((A_i)_{i\in \mathbb{Z}_{>0}}, (\varphi_{j,i})_{i\in \mathbb{Z}_{>0}})$.
  \item[(ii)] Let $A$ be an $L^p$-operator algebra and $A_1\subset A_2\subset A_3\subset\cdots$ be
a sequence of norm closed subalgebras such that $A= \overline{\cup_{i=1}^\infty A_i }$. If for each $i\in \mathbb{Z}_{>0}$, $A_i$ is $p$-nuclear and there exists a $p$-completely contractive map $E_{i}: A\rightarrow A_{i}$ satisfying $E_{i}(a)=a$ for all $a\in A_{i}$, then $A$ is $p$-nuclear.
\end{enumerate}
\end{lem}

\begin{proof}

(i) We let $\Pi_{i=1}^\infty A_i$ and $\oplus_{i=1}^\infty
A_i$ denote the direct product  of $(A_i)_{i\in \mathbb{Z}_{>0}}$ and the the direct sum of $(A_i)_{i\in \mathbb{Z}_{>0}}$, respectively.
Let $\pi$ be the quotient map of $\Pi_{i=1}^\infty A_i$ onto $(\Pi_{i=1}^\infty A_i)/(\oplus_{i=1}^\infty
A_i)$.
For each $i$, define a homomorphism $\phi_i$ from
$A_i$ into $\Pi_{j=1}^\infty A_j$ as $a \longmapsto \left(\varphi_{j,i}(a)\right)_{j=1}^\infty$.
Denote $\psi_i=\pi\circ\phi_i$.

{\it Claim 1.} $\psi_{j}\circ\varphi_{j,i}=\psi_i$ whenever $i\leq j$.

Fix an $i\in \mathbb{Z}_{>0}$. Choose an element $a\in A_i$ and denote $c=\varphi_{i+1,i}(a)$. Then $c\in A_{i+1}$ and,
for $j\geq i+1$, we have
 \[ \varphi_{j,i}(a)= \varphi_{j,i+1}\circ \varphi_{i+1,i}(a)=\varphi_{j,i+1}(\varphi_{i+1,i}(a))=\varphi_{j,i+1}(c).\]
Since $\varphi_{s,t}=0$ whenever $s<t$, this shows that
$$\phi_i(a)- \phi_{i+1}(c)=(\varphi_{j,i}(a))_{j=1}^\infty-(\varphi_{j,i+1}(c))_{j=1}^\infty\in \bigoplus_{i=1}^\infty
A_i. $$
Then $\pi\circ \phi_i(a)=\pi\circ \phi_{i+1}(c)=\pi\circ \phi_{i+1}\circ\varphi_{i+1,i}(a)$,
that is, $\psi_i(a)=\psi_{i+1}\circ\varphi_{i+1,i}(a)$. Since $a\in A_i$ is arbitrary, we obtain
$\psi_i=\psi_{i+1}\circ\varphi_{i+1,i}.$
Using a recursive argument, one can see the claim. We note that $\psi_i(A_i)=\psi_{i+1}\circ\varphi_{i+1,i}(A_i)\subset \psi_{i+1}(A_{i+1})$
for all $i$.

{\it Claim 2.} For each $i\in \mathbb{Z}_{>0}$, $\psi_i$ is $p$-completely isometric.


Fix $i,m\in\mathbb{Z}_{>0}$. We shall prove that $\mathrm{id}_{M_m^p}\otimes \psi_i$ is
isometric. For each $i\in\mathbb{Z}_{>0}$, we assume that $A_i\subset \mathcal{B}(L^p(X_i,\mu_i)$ is a norm  closed subalgebra. Then there exists an isometric representation of  $(\Pi_{i=1}^\infty A_i)/(\oplus_{i=1}^\infty
A_i)$ on
$E:=(\Pi_{i=1}^\infty L^p(X_i,\mu_i))/(\oplus_{i=1}^\infty L^p(X_i,\mu_i))$, which is isometrically isomorphic to an $L^p$-space $L^p(X,\mu)$ (see \cite[Theorem 3.3]{Heinrich}). Hence, for each $(a_{i})_{i=1}^\infty\in \Pi_{i=1}^\infty A_i$,
$\pi((a_{i})_{i=1}^\infty)$ can be viewed as an operator on $E$ given by
$$ \widetilde{\pi}((x_i)_{i=1}^\infty)\longmapsto \widetilde{\pi}((a_ix_i)_{i=1}^\infty),$$
where $\widetilde{\pi}$ is the quotient map from $\Pi_{i=1}^\infty L^p(X_i,\mu_i)$ onto $E$.

For $(a_{i})_{i=1}^\infty\in \Pi_{i=1}^\infty A_i$, one can see that \[ \|\pi((a_{i})_{i=1}^\infty)\|=\lim_{n\rightarrow\infty}\sup_{i\geq n}\|a_i\|=\limsup_{i\rightarrow\infty}\|a_i\|.\]
Likewise, given $m\in \mathbb{Z}_{>0}$ and
$(a_{i,s,t})_{i=1}^\infty\in \Pi_{i=1}^\infty A_i$ with $1\leq s,t\leq m$, we have
\[
\left\|\sum_{s,t=1}^m e_{s,t}\otimes \pi\Big((a_{i,s,t})_{i=1}^\infty\Big)\right\|
 = \limsup_{i\rightarrow \infty} \left\|\sum_{s,t=1}^me_{s,t}\otimes a_{i, s,t} \right\|.\]
 Then, for any $\{a_{s,t}\}_{1\leq s,t\leq m}\subset A_i$,
we have
$$\begin{aligned}
& \left\|\mathrm{id}_{M_m^p}\otimes \psi_i\left(\sum_{s,t=1}^m e_{s,t}\otimes a_{s,t}\right)\right\|
 = \left\|\sum_{s,t=1}^m e_{s,t}\otimes \big(\pi\circ \phi_i(a_{s,t})\big)\right\|
\\=& \left\|\sum_{s,t=1}^m e_{s,t}\otimes \pi\big( (\varphi_{k,i}(a_{s,t}))_{k=1}^\infty\big)\right\|
= \limsup_{k\rightarrow \infty}\left\|\sum_{s,t=1}^m e_{s,t}\otimes \varphi_{k,i}(a_{s,t})\right\|
\\=& \limsup_{k\rightarrow \infty}\left\|\sum_{s,t=1}^m e_{s,t}\otimes a_{s,t}\right\|=\left\|\sum_{s,t=1}^m e_{s,t}\otimes a_{s,t}\right\|.
\end{aligned}$$
Hence $\psi_i$ is $p$-completely isometric. This proves Claim 2.

Let $A=\overline{\cup_{i=1}^\infty\psi_i(A_i)}.$ Then, by Claim 1 and Claim 2, $(A, (\psi_i)_{i=1}^\infty)$ is a direct
limit of $((A_i)_{i\in \mathbb{Z}_{>0}}, (\varphi_{j,i})_{i\leq j})$.

(ii)  Arbitrarily choose a finite subset $F=\{x_1,\cdots,x_r\}$ of $A$ and
an $\varepsilon>0$. Since $A=\overline{\cup_{i=1}^\infty A_i}$, there exist a positive integer $N$ and $\{y_1,\cdots,y_r\}$ $\subset A_N$ such that $\max_{1\leq i\leq r}\|x_i-y_i\|<\frac{\varepsilon}{3}$. Since $A_{N}$ is $p$-nuclear, it follows that there exist a positive integer $k$ and two $p$-completely contractive maps $F:A_{N}\rightarrow M_k^p$ and $G: M_k^p\rightarrow A_{N}$ such that $\|G\circ F(y_i)-y_i\|<\frac{\varepsilon}{3}$ for all $i\in \{1,2,\cdots,r\}$.

By the hypothesis, there exists a $p$-completely contractive map $E_{N}:A\rightarrow A_{N}$ satisfying $E_{N}(y)=y$ for all $y\in A_{N}$. Let $\overline{F}=F\circ E_{N}$. Then $\overline{F}$ is a $p$-completely contractive map  from $A$ to $M_k^p$ and 
$$\begin{aligned}
\|G\circ\overline{F}(x_i)-x_i\|&\leq \|G\circ\overline{F}(x_i)-G\circ\overline{F}(y_i)\| + \|G\circ\overline{F}(y_i)-y_i\|+\|y_i-x_i\|\\
& <\frac{\varepsilon}{3}+\frac{\varepsilon}{3}+\frac{\varepsilon}{3}=\varepsilon \end{aligned}$$
 for $i\in \{1,2,\cdots,r\}$. Note that $G$ can be viewed as a $p$-completely contractive map
 from $M_k^p$ to $ A$. Hence $A$ is $p$-nuclear.
\end{proof}}

{

\begin{rmk} In Lemma \ref{direct} (ii), we do not know whether the condition on the existence of $(E_i)_{i\in \mathbb{Z}_{>0}}$
  can be cancelled. The existence of a $p$-completely contractive map $E_{i}: A\rightarrow A_{i}$ guarantees
  that each $p$-completely contractive map from $A_{i}$ to $M_k^p$ admits a $p$-completely contractive extension to $A$.
Let $V$ be a subspace of an operator space $W$ and let $H$ be a Hilbert space.
 The Arveson-Wittstock-Hahn-Banach theorem (see \cite[Theorem 4.1.5]{Ruan}) asserts that every
 completely contractive map $\varphi: V\rightarrow \mathcal{B}(H)$ can be extended to a completely contractive map on $W$. However, in the setting of $p$-operator spaces with $p\ne 2$, such an extension result does not hold in general (see \cite{Lee}).
\end{rmk}


N. C. Phillips has pointed out that the spatial $L^p$-UHF algebras are $p$-nuclear (see \cite[Theorem 4.6]{Phillips look like}). Also this can be seen from Lemma \ref{direct}. }

{ Recall that $M_n^p=\mathcal{B}(l_n^p)$, $n=1,2,3, \cdots$.
The direct limit of $(M_n^p)_{n=1}^\infty$ is denoted by $\overline{M}_{\mathbb{Z}_{> 0}}^p$, where the inclusion map of $M_n^p$ into $M_{n+1}^p$ is given by
$$a   \longmapsto \begin{pmatrix}
a & 0 \\
0 & 0
\end{pmatrix}  .$$}
\begin{lem}\label{tensor}
Let $p\in [1,\infty)$ and $A$ be a norm closed subalgebra of {$\mathcal{B}(L^p(X,\mu))$}. Then
\begin{enumerate}
\item[(i)] given {any positive integer $n$}, $M_n^p\otimes_p A$ is $p$-nuclear if and only if $A$ is $p$-nuclear;
\item[(ii)] $\overline{M}_{\mathbb{Z}_{> 0}}^p\otimes_p A$  is $p$-nuclear if and only if $A$ is $p$-nuclear.
\end{enumerate}
\end{lem}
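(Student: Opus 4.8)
The plan is to prove Lemma \ref{tensor} by first establishing (i) for a fixed $n$ and then deducing (ii) by a direct limit argument using Lemma \ref{matrix}. Throughout I would exploit the two standard directions of a nuclearity-type statement separately: producing approximating maps through $M_n^p\otimes_p A$ from approximating maps through $M_n^p$ (and conversely), keeping careful track of $p$-complete contractivity.

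For part (i), one direction is nearly immediate: if $M_n^p\otimes_p A$ is $p$-nuclear, then since $A$ embeds $p$-completely isometrically into $M_n^p\otimes_p A$ as the upper-left corner $a\mapsto e_{1,1}\otimes a$, and there is a $p$-completely contractive projection back onto this corner (the compression $x\mapsto (e_{1,1}\otimes 1)\,x\,(e_{1,1}\otimes 1)$, identified with its range), one applies Lemma \ref{matrix} with $B=M_n^p\otimes_p A$ to conclude $A$ is $p$-nuclear. For the converse, suppose $A$ is $p$-nuclear and fix a finite subset $F\subset M_n^p\otimes_p A$ and $\varepsilon>0$. Writing each element of $F$ as $\sum_{i,j}e_{i,j}\otimes a_{i,j}^{(k)}$, let $F_0\subset A$ be the finite set of all the matrix entries $a_{i,j}^{(k)}$ that appear. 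Choose $m$ and $p$-completely contractive $\varphi_0:A\to M_m^p$, $\psi_0:M_m^p\to A$ with $\|\psi_0\varphi_0(a)-a\|<\delta$ for $a\in F_0$, where $\delta$ is chosen small in terms of $\varepsilon$ and $n$. Then $\mathrm{id}_{M_n^p}\otimes\varphi_0$ maps $M_n^p\otimes_p A$ into $M_n^p\otimes_p M_m^p\cong M_{nm}^p$, and $\mathrm{id}_{M_n^p}\otimes\psi_0$ maps $M_{nm}^p$ back into $M_n^p\otimes_p A$; both are $p$-completely contractive essentially by definition of $\|\cdot\|_{cb}$ (the amplification of a $p$-completely contractive map is $p$-completely contractive, since amplifying twice is still an amplification). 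On $F$ the composite differs from the identity entrywise by at most $\delta$ in each of the $n^2$ slots; controlling the resulting norm on $M_n^p\otimes_p A$ requires only that the coordinate-insertion map $(b_{i,j})\mapsto\sum e_{i,j}\otimes b_{i,j}$ from $n^2$ copies of $A$ has norm bounded by a constant depending only on $n$ (indeed $n$ suffices, by applying matrix units on the left and right), so choosing $\delta=\varepsilon/n^2$ works.

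For part (ii), the ``only if'' direction again follows from Lemma \ref{matrix}: $A$ sits $p$-completely isometrically in $\overline{M}_{\mathbb{Z}_{>0}}^p\otimes_p A$ with a $p$-completely contractive corner projection back, exactly as above. For the ``if'' direction, suppose $A$ is $p$-nuclear, fix a finite $F\subset\overline{M}_{\mathbb{Z}_{>0}}^p\otimes_p A$ and $\varepsilon>0$. Since $\cup_n M_n^p$ is dense in $\overline{M}_{\mathbb{Z}_{>0}}^p$, one can choose $n$ large enough and a finite set $F'\subset M_n^p\otimes_p A$ with each element of $F$ within $\varepsilon/3$ of the corresponding element of $F'$ (here one uses that $M_n^p\otimes_p A$ sits isometrically inside $\overline{M}_{\mathbb{Z}_{>0}}^p\otimes_p A$, via the inclusion $M_n^p\hookrightarrow\overline{M}_{\mathbb{Z}_{>0}}^p$; this inclusion is $p$-completely isometric, and the inclusion $M_n^p\otimes_p A\hookrightarrow\overline{M}_{\mathbb{Z}_{>0}}^p\otimes_p A$ comes with a $p$-completely contractive conditional-expectation-type projection given by compression with $\sum_{i=1}^n e_{i,i}$). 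By part (i), $M_n^p\otimes_p A$ is $p$-nuclear, so there are $m$ and $p$-completely contractive $\Phi:M_n^p\otimes_p A\to M_m^p$, $\Psi:M_m^p\to M_n^p\otimes_p A\subset\overline{M}_{\mathbb{Z}_{>0}}^p\otimes_p A$ with $\|\Psi\Phi(x)-x\|<\varepsilon/3$ for $x\in F'$. Precomposing $\Phi$ with the projection $\overline{M}_{\mathbb{Z}_{>0}}^p\otimes_p A\to M_n^p\otimes_p A$ (still $p$-completely contractive) and composing $\Psi$ with the inclusion gives the required pair, and the triangle inequality yields the bound on $F$.

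The main obstacle I anticipate is the bookkeeping around $p$-complete contractivity of the various structural maps — in particular verifying that corner embeddings, corner compressions, and the ``diagonal truncation'' $\overline{M}_{\mathbb{Z}_{>0}}^p\otimes_p A\to M_n^p\otimes_p A$ are genuinely $p$-completely contractive (not merely contractive), and that $M_n^p\otimes_p M_m^p$ is $p$-completely isometrically $M_{nm}^p$ (so that the amplified maps land where claimed). These are the spatial-$L^p$ analogues of facts that are routine for $C^*$-algebras, but since there is no $C^*$-identity one must check them directly from the concrete realization on $L^p$ of a product measure space; fortunately, matrix units and block-diagonal operators on $l^p$ behave well enough that each such map is realized by an honest (completely contractive, even completely isometric) spatial operation, so no subtlety beyond careful identification of the underlying $L^p$-spaces should arise.
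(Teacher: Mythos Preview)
Your proposal is correct and follows essentially the same approach as the paper: corner embedding and corner compression for the ``$M_n^p\otimes_p A$ $p$-nuclear $\Rightarrow$ $A$ $p$-nuclear'' direction, amplification $\mathrm{id}_{M_n^p}\otimes\varphi_0$, $\mathrm{id}_{M_n^p}\otimes\psi_0$ with tolerance $\varepsilon/n^2$ for the converse, and for (ii) approximation by $M_N^p\otimes_p A$ via the compression $\kappa$ followed by an appeal to (i). The only cosmetic difference is that you package the ``only if'' directions as applications of Lemma~\ref{matrix} (using that $\psi\circ\varphi=\mathrm{id}$ on the corner), whereas the paper writes out the composition $(\rho\circ\psi)\circ(\varphi\circ\iota)$ directly; the content is identical.
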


\begin{proof}
(i) ``$\Longleftarrow$".  Assume that $A$ is $p$-nuclear. {Arbitrarily choose a finite subset $F_1$} of $M_n^p\otimes_p A$ and an $\varepsilon>0$. Let $F_2$ be the collection of the matrix entries of elements in $F_1$. Clearly, $F_2$ is a finite subset of $A$. Since $A$ is $p$-nuclear, it follows that there exist a positive integer $k$ and two $p$-completely contractive maps $\varphi:A\rightarrow M_k^p$ and $\psi:M_k^p\rightarrow A$ such that $$\|\psi\circ\varphi(a)-a\|<\frac{\varepsilon}{n^2}$$ for all $a\in F_2$.
Then $$\left\|\big(\mathrm{id}_{M_n^p}\otimes \psi\big)\circ \big(\mathrm{id}_{M_n^p}\otimes \varphi\big)\left(\sum_{i,j}^ne_{i,j}\otimes a_{i,j}\right)-\left(\sum_{i,j}^ne_{i,j}\otimes a_{i,j}\right)\right\|<\varepsilon$$ for all $\sum_{i,j}^ne_{i,j}\otimes a_{i,j}\in F_1$. It follows that $M_n^p\otimes_p A$ is $p$-nuclear.

``$\Longrightarrow$". Assume that $M_n^p\otimes_p A$ is $p$-nuclear. Let $\iota:A\rightarrow M_n^p\otimes_p A$ be {the inclusion map} $a\mapsto e_{1,1}\otimes a$. Let $\rho:M_n^p\otimes_p A\rightarrow A$ be the linear map sending $\sum_{i,j}e_{i,j}\otimes a_{i,j}$ to $a_{1,1}$. One can check that $\iota$ is $p$-completely isometric  and $\rho$ is $p$-completely contractive.

{Arbitrarily choose a finite subset $F$ of $A$ and an $\varepsilon>0$.} Then $\iota(F)$ is a finite subset of $M_n^p\otimes_p A$. Since $M_n^p\otimes_p A$ is $p$-nuclear, it follows that there exist a positive integer $k$ and two $p$-completely contractive maps $\varphi:M_n^p\otimes_p A\rightarrow M_k^p$ and $\psi:M_k^p\rightarrow M_n^p\otimes_p A$ such that $$\|\psi\circ\varphi(T)-T\|<\varepsilon$$ for all $T\in \iota(F)$. Then we have $$\|(\rho\circ\psi)\circ(\varphi\circ\iota)(a)-a\|<\varepsilon$$ for all $a\in F$. Hence $A$ is $p$-nuclear.

(ii) The proof for the necessity is similar to that of (i) and hence is omitted.

``$\Longleftarrow$".
{Let $j$ be a positive integer and $P_j\in \mathcal{B}(l^p(\mathbb{Z}_{> 0}))$ be the projection given by
$(x_1,x_2,\cdots,x_{j},x_{j+1},x_{j+2},\cdots) \mapsto (x_1,x_2,\cdots,x_{j},0,0,\cdots)$.
Let $E=L^p(X,\mu)$ and $I_E$ be the identity operator on $E$. Then the map $$E_j:\overline{M}_{\mathbb{Z}_{> 0}}^p\otimes_p A\longrightarrow M_j^p\otimes_p A,\ \ \ \ (T\otimes a)\longmapsto (P_j\otimes I_E)(T\otimes a)(P_j\otimes I_E)$$ is $p$-completely contractive, $j=1,2,3,\cdots$.
Note that $\overline{M}_{\mathbb{Z}_{> 0}}^p\otimes_p A$ is the direct limit of $(M_j^p\otimes_p A)_{j\in \mathbb{Z}_{> 0}}$ with respect to the natural inclusion maps. By (i), each $M_j^p\otimes_p A$ is $p$-nuclear, it follows from Lemma \ref{direct} that $\overline{M}_{\mathbb{Z}_{>0}}^p\otimes_p A$ is $p$-nuclear.}
 \end{proof}


Let $p\in [1,\infty)$ and $E=L^p(X,\mu)$. Next we give a useful lemma concerning the left-regular representation of a discrete group $G$ on $l^p(G)\otimes_pE$.

We let $E^*$ denote the dual space of $E$.
For $x\in E$ and $y^*\in E^*$, we also write $\la x,y^*\ra$ for $y^*(x).$
For $T\in \mathcal{B}(E)$, we denote by $T^*$ its {\it adjoint} operator on $E^*$ defined by
$$T^*(\phi)(x):=\phi(Tx)$$ for $\phi\in E^*$ and $x\in E$. Clearly, $\la x,T^*(y^*)\ra=\la Tx,y^*\ra$ for $x\in E$ and {$y^*\in E^*$}.

\begin{lem}\label{adjoint}
Let $p\in [1,\infty)$, $E=L^p(X,\mu)$ and $A$ be a norm closed subalgebra of $\mathcal{B}(E)$.
Assume that $\iota_0: A\rightarrow \mathcal{B}(E)$ is the inclusion map with $(\iota,\lambda_p^E)$ being its associated regular covariant representation.
 If $q$ is the conjugate exponent of $p$ (that is, $1/p+1/q=1$), then
 $$[\lambda_p^E(s)]^*=\lambda_q^{E^*}(s^{-1})\ \ \textup{for all}  \ s\in G.$$
\end{lem}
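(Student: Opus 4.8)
The plan is to compute both sides of the claimed identity on an arbitrary vector of the predual space $l^p(G,E)^* \cong l^q(G,E^*)$ and verify they agree. First I would pin down the duality pairing: writing $l^p(G)\otimes_p E \cong l^p(G,E)$, its dual is $l^q(G,E^*)$ with pairing $\langle \xi,\eta\rangle = \sum_{t\in G}\langle \xi(t),\eta(t)\rangle$ for $\xi\in l^p(G,E)$ and $\eta\in l^q(G,E^*)$, where inside each summand we use the pairing of $E$ with $E^*$. Then, by the definition of the adjoint, for $\xi\in l^p(G,E)$ and $\eta\in l^q(G,E^*)$ one has $\langle \xi, [\lambda_p^E(s)]^*\eta\rangle = \langle \lambda_p^E(s)\xi,\eta\rangle$, and the goal is to identify the right-hand side with $\langle \xi,\lambda_q^{E^*}(s^{-1})\eta\rangle$.

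The key step is a change-of-variables in the summation index over $G$. Using $(\lambda_p^E(s)\xi)(t)=\xi(s^{-1}t)$ we get
\[
\langle \lambda_p^E(s)\xi,\eta\rangle=\sum_{t\in G}\langle \xi(s^{-1}t),\eta(t)\rangle=\sum_{u\in G}\langle \xi(u),\eta(su)\rangle,
\]
after substituting $u=s^{-1}t$, i.e. $t=su$. On the other side, $\lambda_q^{E^*}(s^{-1})$ acts on $\eta\in l^q(G,E^*)$ by $(\lambda_q^{E^*}(s^{-1})\eta)(u)=\eta((s^{-1})^{-1}u)=\eta(su)$, so
\[
\langle \xi,\lambda_q^{E^*}(s^{-1})\eta\rangle=\sum_{u\in G}\langle \xi(u),\eta(su)\rangle,
\]
which matches. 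Since $\xi$ and $\eta$ were arbitrary and the pairing separates points, this yields $[\lambda_p^E(s)]^*=\lambda_q^{E^*}(s^{-1})$ for every $s\in G$. I would also remark, if needed for later use, that $\lambda_q^{E^*}$ is precisely the left-regular representation of $G$ on $l^q(G,E^*)$ associated with the inclusion of (the relevant algebra acting on) $E^*$, so the statement says the adjoint of a regular representation on an $L^p$-space is again a regular representation on the dual $L^q$-space.

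I do not expect a genuine obstacle here; the only points requiring care are bookkeeping ones: correctly identifying the predual $l^p(G)\otimes_p E$ with $l^p(G,E)$ and its dual with $l^q(G,E^*)$ (so that the pairing is the unconditionally convergent double sum $\sum_{t}\langle\xi(t),\eta(t)\rangle$, which is legitimate by Hölder), and getting the direction of the index substitution right so that $\lambda_p$ with argument $s$ turns into $\lambda_q$ with argument $s^{-1}$ rather than $s$. Everything else is a routine unwinding of definitions.
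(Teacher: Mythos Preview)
Your proposal is correct and follows essentially the same approach as the paper: identify $(l^p(G)\otimes_p E)^*$ with $l^q(G,E^*)$, compute the pairing $\langle \lambda_p^E(s)\xi,\eta\rangle$, and perform the substitution $t\mapsto s^{-1}t$ to recognize $\lambda_q^{E^*}(s^{-1})\eta$. The only cosmetic difference is that the paper verifies the identity on finitely supported $\xi$ and $\eta$ and then invokes density, whereas you work directly with arbitrary $\xi,\eta$; your version is in fact slightly cleaner, since it sidesteps the density claim for $C_c(G,E^*)$ in the $p=1$ case.
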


\begin{proof}
Since $l^p(G)\otimes_p E$ can be identified as an $L^p$-direct sum of $E$, it follows from \cite[page 77, Exercise 4]{Conway} that the dual space of $l^p(G)\otimes_p E$ is isometrically isomorphic to $l^q(G)\otimes_q E^*$.
If $q=\infty$, then
$$l^q(G)\otimes_q E^*=\left\{(\eta_t)_{t\in G}: \sup_{t\in G}\|\eta_t\|<\infty,~\eta_t\in E^*\right\}.$$

For $t\in G$, we denote by $\delta_t$ the characteristic function of $\{t\}$.
Fix an $s\in G$. For $\sum_{t\in G}\delta_t\otimes \xi_t \in C_c(G,E)$ and $\sum_{r\in G}\delta_r\otimes \eta_r\in C_c(G,E^*)$,
we have $$\begin{aligned}
& \left\la \sum_{t\in G}\delta_t\otimes\xi_t,\ \lambda_p^E(s)^* \left(\sum_{r\in G}\delta_r\otimes\eta_r \right) \right\ra\\
=&\left\la \lambda^E_p(s)\left(\sum_{t\in G}\delta_t\otimes\xi_t \right),\sum_{r\in G}\delta_r\otimes\eta_r \right\ra\\
 =&\left\la\sum_{t\in G}\delta_{st}\otimes \xi_t,\ \sum_{r\in G}\delta_r\otimes\eta_r\right\ra
  = \sum_{t\in G}\big\la\xi_t,\eta_{st}\big\ra
  \\=&\left\la\sum_{t\in G}\delta_t\otimes\xi_t,\ \sum_{r\in G}\delta_{s^{-1}r}\otimes\eta_r\right\ra
  \\=&\left\la\sum_{t\in G}\delta_t\otimes\xi_t,\ \lambda_q^{E^*}(s^{-1})\left(\sum_{r\in G}\delta_r\otimes\eta_r\right)\right\ra .\end{aligned}$$
Since $C_c(G,E)$ is dense in $l^p(G)\otimes_p E$ and $C_c(G,E^*)$ is dense in $l^q(G)\otimes_q E^*$, it follows    that $\lambda_p^E(s)^*=\lambda_q^{E^*}(s^{-1})$.\end{proof}

\subsection{Proof of Theorem \ref{T:main}}

We first give a useful lemma.
\begin{lem}\label{exp}
Let $G$ be a countable discrete group with the unit $e$ and $E_e:F^p_\lambda(G,A,\alpha)\rightarrow A$ be the standard conditional expectation which maps $\sum_{s\in G} a_{s}\delta_s$ $\in$ $C_c(G,A,\alpha)$ to $a_e$. Then $E_e$ is $p$-completely contractive.
\end{lem}

\begin{proof}
To prove the lemma, we shall construct a linear map $\Phi$ from $F^p_\lambda(G,A,\alpha)$ to $\mathcal{B}(l^p(G)\otimes_p E)$ and
verify its $p$-complete contractivity.

{We denote by $\iota_0:A\rightarrow \mathcal{B}(E)$ the inclusion map with $(\iota,\lambda_p^E)$ being its associated regular covariant representation. By \cite[Lemma 3.19]{Phillips Lp Cuntz}, we may assume that $\iota\rtimes\lambda_p^E$ is an isometric representation of $F^p_\lambda(G,A,\alpha)$. Hence we identify $F^p_\lambda(G,A,\alpha)$ with the completion of $C_c(G,A,\alpha)$ under the operator norm of the representation $\iota\rtimes\lambda_p^E$. We also write $\iota\rtimes\lambda_p^E(f)$ for the typical element of $F^p_\lambda(G,A,\alpha)$ corresponding to $f=\sum_{t\in G}a_t\delta_t\in C_c(G,A,\alpha)$.}

Let $P_{e}$ be the rank-one projection of $l^p(G)$ to $l^p(\{e\})\subset l^p(G)$ and $I_E$ be the identity operator on $E$.
Identifying $l^p(G)\otimes_p E$ with the $L^p$-direct sum $\oplus_{t\in G}E$, we may simply take the $L^p$-direct sum representation
\[
\iota(a)=\oplus_{t\in G}\alpha_{t^{-1}}(a)\in \mathcal{B}\left(\oplus_{t\in G}E\right).\]
In the spatial $L^p$-operator tensor product picture, we have
\begin{align*}\iota(a)=\sum_{t\in G}\sigma_{t,t}\otimes \alpha_{t^{-1}}(a)
\end{align*}
where $\sigma_{t,t}\in \mathcal{B}(l^p(G))$ is defined by \[ \sigma_{t,t}(\delta_s) =\begin{cases}
\delta_t,& s=t,\\
0,& s\ne t,
\end{cases}\] and { the sum is defined as the limit of the net $\{\sum_{t\in F}\sigma_{t,t}\otimes \alpha_{t^{-1}}(a)\}_{F\in \mathcal{F}}$ in the strong operator topology, where $\mathcal{F}$ is the directed set consisting of all finite subsets of $G$ ordered by inclusion (see \cite[Definition 4.11]{Conway}).} Let $\lambda_{p}:G\rightarrow \mathcal{B}(l^p(G))$ be the left-regular representation.
For each $s\in G$, we have $\lambda_{p}^{E}(s) =\lambda_{p}(s)\otimes I_E$.
For any $f=\sum_{s\in G}a_s\delta_s\in C_c(G,A,\alpha)$, we have
\begin{align*}
&(P_e\otimes I_E)\big(\iota\rtimes\lambda_p^E(f)\big)(P_e\otimes I_E)\\
=& (P_e\otimes I_E)\left[\sum_{s\in G} \iota(a_s)\lambda_p^E(s)\right](P_e\otimes I_E)\\
=&(P_e\otimes I_E)\left[\sum_{s\in G}\left(\sum_{t\in G} \sigma_{t,t}\otimes\alpha_{t^{-1}}(a_s)\right)(\lambda_p(s)\otimes I_E)\right](P_e\otimes I_E)
  \\ =&\sum_{s\in G}(P_e\otimes I_E)\left[\sum_{t\in G}\sigma_{t,t}\otimes\alpha_{t^{-1}}(a_s)\right](\lambda_p(s)\otimes I_E)(P_e\otimes I_E)
  \\=&\sum_{s\in G}(P_e\otimes a_s)(\lambda_p(s)\otimes I_E)(P_e\otimes I_E)=P_e\otimes a_e.
\end{align*}

We define a linear map $\Phi: F^p_\lambda(G,A,\alpha)\rightarrow \mathcal{B}(l^p(G)\otimes_p E)$ by
$$\Phi(\iota\rtimes\lambda_p^E(f))=(P_e\otimes I_E)\left[\iota\rtimes \lambda_p^E(f)\right](P_e\otimes I_E)$$ for $f\in C_c(G,A,\alpha)$.
{For any positive integer $n$ and $\{f_{i,j}\}_{1\leq i,j\leq n}\subset C_c(G,A,\alpha)$, we have
 $$\begin{aligned}
&\left\|\mathrm{id}_{M_n^p}\otimes E_e\left(\sum_{i,j=1}^ne_{i,j}\otimes\left(\iota\rtimes\lambda_p^E(f_{i,j})\right)\right)\right\|=
\left\|\sum_{i,j=1}^ne_{i,j}\otimes f_{i,j}(e)\right\| \\
 =& \left\|P_e\otimes\sum_{i,j=1}^ne_{i,j}\otimes f_{i,j}(e)\right\|
  =\left\|\sum_{i,j=1}^n e_{i,j}\otimes (P_e\otimes f_{i,j}(e))\right\|
  \\=& \left\|\sum_{i,j=1}^ne_{i,j}\otimes \left[(P_e\otimes I_E)(\iota\rtimes\lambda_p^E(f_{i,j}))(P_e\otimes I_E)\right]\right\|
  \\= &\left\|\mathrm{id}_{M_n^p}\otimes \Phi\left(\sum_{i,j=1}^n e_{i,j}\otimes (\iota\rtimes\lambda_p^E)(f_{i,j})\right)\right\|. \end{aligned}$$
Hence,} in order to prove that $E_e$ is $p$-completely contractive, it suffices to show that $\Phi$ is $p$-completely contractive.
A direct computation shows that
$$\begin{aligned}
&\left\|\mathrm{id}_{M_n^p}\otimes \Phi\left(\sum_{i,j=1}^ne_{i,j}\otimes \left(\iota\rtimes\lambda_p^E(f_{i,j})\right)\right)\right\| \\
 =& \left\|\sum_{i,j=1}^ne_{i,j}\otimes \left[(P_e\otimes I_E)\left(\iota\rtimes\lambda_p^E(f_{i,j})\right)(P_e\otimes I_E)\right]\right\|
  \\ =&\left\|\Big(\mathrm{id}_{M_n^p}\otimes (P_e\otimes I_E)\Big)\left[ \sum_{i,j=1}^ne_{i,j}\otimes\left(\iota\rtimes_p^E(f_{i,j})\right)\right]\Big(\mathrm{id}_{M_n^p}\otimes (P_e\otimes I_E)\Big)\right\|
  \\ \leq &\left\|\sum_{i,j=1}^ne_{i,j}\otimes [\iota\rtimes\lambda_p^E(f_{i,j})]\right\|. \end{aligned}$$
Since $C_c(G,A,\alpha)$ is dense in $F^p_\lambda(G,A,\alpha)$, it follows that $\Phi$ is $p$-completely contractive.\end{proof}

Now we are going to give the proof of Theorem \ref{T:main}.

\begin{proof}[Proof of Theorem \ref{T:main}]
Since $G$ is amenable, it follows from \cite[Theorem 7.1]{Phillips look like} that $F^p(G,A,\alpha)$ is isometrically isomorphic to $F^p_\lambda(G,A,\alpha)$. Hence it suffices to prove that $F^p_\lambda(G,A,\alpha)$ is $p$-nuclear if and only if $A$ is $p$-nuclear. { As in the proof of Lemma \ref{exp}, we still assume that $\iota\rtimes\lambda_p^E$ is an isometric representation of $F^p_\lambda(G,A,\alpha)$, and
we write $\iota\rtimes\lambda_p^E(f)$ for the typical element of $F^p_\lambda(G,A,\alpha)$ corresponding to $f=\sum_{t\in G}a_t\delta_t\in C_c(G,A,\alpha)$.}

``$\Longrightarrow$". Assume that $F^p_\lambda(G,A,\alpha)$ is $p$-nuclear and $G$ is amenable with $e$ being its unit. We shall prove that $A$ is $p$-nuclear.

%

{Let $\rho$ be the canonical inclusion map from $A$ into $F^p_\lambda(G,A,\alpha)$, that is, $\rho(a)=\iota(a)\lambda_p^E(e)$, where $e$ is the unit of $G$. Clearly, $\rho$ is isometric. Since $\alpha$ is $p$-completely isometric, using the naturality of the construction of $\rho$, one can check that $\rho$ is $p$-completely isometric. }

Arbitrarily choose a finite subset $F$ of $A$ and an $\varepsilon>0$.
It is clear that $\rho(F)$ is a finite subset of $F^p_\lambda(G,A,\alpha)$. Since $F^p_\lambda(G,A,\alpha)$ is $p$-nuclear, there exist a positive integer $n$ and two $p$-completely contractive maps $\varphi:F^p_\lambda(G,A,\alpha)\rightarrow M_n^p$ and $\psi:M_n^p\rightarrow F^p_\lambda(G,A,\alpha)$ such that $$\|\psi\circ\varphi(f)-f\|<\varepsilon$$ for all $f\in \rho(F)$. Then we have $$\|(E_e\circ\psi)\circ(\varphi\circ\rho)(a)-a\|<\varepsilon$$ for all $a\in F$. Hence $A$ is $p$-nuclear.

``$\Longleftarrow$".
Arbitrarily fix a finite subset $F_1=\{x_1,x_2,\cdots,x_n\}$ of $F^p_\lambda(G,A,\alpha)$ and an $\varepsilon>0$. Then there exists a finite subset $F_2=\{f_1,f_2,\cdots,f_n\}$ of $C_c(G,A,\alpha)$ such that $$\|x_i-\iota\rtimes\lambda_p^E(f_i)\|<\frac{\varepsilon}{3}$$ for all $i\in\{1,2,\cdots,n\}$. Let $F_3$ be the union of the supports of all $f_i$'s. Then $F_3$ is a finite subset of $G$. Set $M=\max\{\|\iota\rtimes\lambda_p^E(f_i)\|: 1\leq i\leq n\}$. Since $G$ is amenable, it follows that there exists a finite subset $F$ of $G$ such that $$\max_{s\in F_3}\frac{|sF\bigtriangleup F|}{|F|}<\frac{\varepsilon}{3M},$$ where $|sF\bigtriangleup F|$ and $|F|$
denote the cardinalities of $sF\bigtriangleup F$ and $F$, respectively.

We let $M_F^p$ denote the collection of all $a\in \mathcal{B}(L^p(G))$ such that $a\xi=0$ whenever $\xi|_F=0$ and such that $a\xi\in l^p(F)\subset l^p(G)$ for all $\xi\in l^p(G)$. Moreover, we let $\{\sigma_{s,t}\}_{s,t\in F}$ denote the canonical matrix units of $M_F^p$.

Next we shall construct $p$-completely contractive maps $\varphi:F^p_\lambda(G,A,\alpha)\rightarrow M_{F}^p\otimes_p A$ and $\psi:M_{F}^p\otimes_p A\rightarrow F^p_\lambda(G,A,\alpha)$.

{\it Step 1.} The construction of $\varphi$.

Let $P$ be the finite-rank projection of $l^p(G)$ onto the linear span of $\{\delta_t:t\in F\}$.
Then, for $a\in A$,  we have $$(P\otimes I_E)\iota(a)=(P\otimes I_E) \iota(a)(P\otimes I_E)=\sum_{t\in F}\sigma_{t,t}\otimes \alpha_{t^{-1}}(a).$$
Thus one can see that $$\begin{aligned}
&(P\otimes I_E)\iota(a)\lambda_p^{E}(s)(P\otimes I_E)\\
=&(P\otimes I_E)\iota(a)\big(\lambda_p(s)\otimes I_E\big)(P\otimes I_E)
 \\=&
 \left(\sum_{t\in F}\sigma_{t,t}\otimes \alpha_{t^{-1}}(a)\right)\big(P\lambda_p(s)P\otimes I_E\big)
  \\=&\left(\sum_{t\in F}\sigma_{t,t}\otimes \alpha_{t^{-1}}(a)\right)
   \left(\sum_{r\in F\cap sF}\sigma_{r,s^{-1}r}\otimes I_E\right)
  \\=&\sum_{r\in F\cap sF}\sigma_{r,s^{-1}r} { \otimes}  \alpha_{r^{-1}}(a)\in M_{F}^p\otimes A. \end{aligned}$$
For $f=\sum_{s\in G} a_s\delta_s\in C_c(G,A,\alpha)$, we define $$\varphi(f)=(P\otimes I_E)\iota\rtimes \lambda_p^{E}(f)(P\otimes I_E).$$ Hence
$$\varphi(f)=\sum_{s\in G}\sum_{r\in F\cap sF}\sigma_{r,s^{-1}r}\otimes \alpha_{r^{-1}}(a_s)\in M_{F}^p\otimes A.$$
{Since $\iota\rtimes\lambda_p^E$ is an isometric representation of $F^p_\lambda(G,A,\alpha)$, it follows that
$$\|\varphi(f)\|=\|(P\otimes I_E)\iota\rtimes \lambda_p^{E}(f)(P\otimes I_E)\|\leq \|\iota\rtimes\lambda_p^E(f)\|.$$ Then $\varphi$ is contractive on $C_c(G,A,\alpha)$.}
So $\varphi$ can extend to a linear map from $F^p_\lambda(G,A,\alpha)$ to $M_{F}^p\otimes_p A$, still denoted by $\varphi$. For any positive integer $k$ and $\sum_{i,j=1}^k e_{i,j}\otimes f_{i,j}\in M_k^p\otimes_p F^p_\lambda(G,A,\alpha)$, we have
$$\begin{aligned}
&\left\|\mathrm{id}_{M_k^p}\otimes \varphi\left(\sum_{i,j=1}^k e_{i,j}\otimes f_{i,j}\right)\right\| =\left\|\sum_{i,j=1}^ke_{i,j}\otimes\varphi(f_{i,j})\right\|
 \\=&\left\|\sum_{i,j=1}^ke_{i,j}\otimes\left((P\otimes I_E)f_{i,j}(P\otimes I_E)\right)\right\|
  \\ =&\left\|\left(\mathrm{id}_{M_k^p}\otimes (P\otimes I_E)\right)\left( \sum_{i,j=1}^ke_{i,j}\otimes f_{i,j}\right)\left(\mathrm{id}_{M_k^p}\otimes (P\otimes I_E)\right)\right\|
  \\\leq &\left\|\sum_{i,j=1}^k e_{i,j}\otimes f_{i,j}\right\|,\end{aligned}$$
which implies that $\varphi$ is $p$-completely contractive.

{\it Step 2.} The construction of $\psi$.

We define $\psi:M_{F}^p\otimes_p A\rightarrow F^p_\lambda(G,A,\alpha)$ by $$\psi(\sigma_{s,t}\otimes a)=\frac{1}{|F|}\iota\left(\alpha_s(a)\right)\lambda_p^E({st^{-1}}).$$
Next we shall show that $\psi$ is $p$-completely contractive. Arbitrarily fix a positive integer $k$ and $T\in M_k^p\otimes_p (M_{F}^p\otimes_p A)$. It suffices to show that $$\left\|\mathrm{id}_{M_k^p}\otimes \psi(T)\right\|\leq\|T\|.$$

Assume that $T=\sum_{i,j=1}^ke_{i,j}\otimes \left(\sum_{s,t\in F}\sigma_{s,t}\otimes a_{s,t}^{i,j}\right)$, where $\{e_{i,j}\}_{i,j=1}^k$ is the canonical matrix units of $M_k^p$, $\{\sigma_{s,t}\}_{s,t\in F}$ is the canonical matrix units of $M_{F}^p$ and $a_{s,t}^{i,j}\in A$. Then we have $$\mathrm{id}_{M_k^p}\otimes \psi(T)=\frac{1}{|F|}\sum_{i,j=1}^ke_{i,j}\otimes\left[\sum_{s,t\in F}\iota\left(\alpha_s(a_{s,t}^{i,j})\right)\lambda_p^E(st^{-1})\right].$$

Arbitrarily choose $\xi\in l^p_k\otimes_p (l^p(G)\otimes_p E)$ and $\eta=(\eta_1,\eta_2,\cdots,\eta_k)$ $\in l^q_k\otimes_q (l^q(G)\otimes_q E^*)$, where $q$ is the conjugate exponent of $p$.
Identifying $l^p_k\otimes_p (l^p(G)\otimes_p E)$ with $k$ times $L^p$-direct sum of $l^p(G)\otimes_p E$, $\xi\in l^p_k\otimes_p (l^p(G)\otimes_p E)$ can be written as $\xi=(\xi_1,\xi_2,\cdots\xi_k)$, where $\xi_{i}\in l^p(G)\otimes_p E$ for  $i\in \{1,2,\cdots,k\}$.
Similarly, we may assume that $\eta=(\eta_1,\eta_2,\cdots,\eta_k)\in l^q_k\otimes_q (l^q(G)\otimes_q E^*)$, where $\eta_i\in l^q(G)\otimes_q E^*$ for $i\in \{1,2,\cdots,k\}$.

Denote $\widetilde{\xi}=(\widetilde{\xi}_1,\widetilde{\xi}_2,\cdots,\widetilde{\xi}_k)\in l^p_k\otimes_p l^p(F)\otimes_p (l^p(G)\otimes_p E)$, where $\widetilde{\xi}_i=(\widetilde{\xi}_{i,r})_{r\in F}$ and $\widetilde{\xi}_{i,r}= |F|^{-\frac{1}{p}} \lambda_p^E(r^{-1})\xi_i$.
Denote $\widetilde{\eta}=(\widetilde{\eta}_1,\widetilde{\eta}_2,\cdots,\widetilde{\eta}_k)\in l^q_k\otimes_q l^q(F)\otimes_q (l^q(G)\otimes_q E^*)$, where $\widetilde{\eta}_i=(\widetilde{\eta}_{i,r})_{r\in F}$ and $\widetilde{\eta}_{i,r}=|F|^{-\frac{1}{q}}\lambda_q^{E^*}(r^{-1})\eta_i$.
Then, by Lemma \ref{adjoint}, we have
$$\begin{aligned}
\left\la \mathrm{id}_{M_k^p}\otimes \psi(T)\xi,\ \eta \right\ra
&=\frac{1}{|F|}\sum_{s,t\in F}\left\la \begin{pmatrix}
 \sum_{j=1}^k\iota\left(\alpha_s(a_{s,t}^{1,j})\right)\lambda_p^E(st^{-1})\xi_j\\
 \sum_{j=1}^k\iota\left(\alpha_s(a_{s,t}^{2,j})\right)\lambda_p^E(st^{-1})\xi_j\\ \vdots\\ \sum_{j=1}^k\iota\left(\alpha_s(a_{s,t}^{k,j})\right)\lambda_p^E(st^{-1})\xi_j
\end{pmatrix},\ \begin{pmatrix}
 \eta_1\\
 \eta_2\\ \vdots\\ \eta_k
\end{pmatrix}\right\ra \\
&=\frac{1}{|F|}\sum_{s,t\in F}\sum_{i,j=1}^{k}\left\la \iota\left(\alpha_s(a_{s,t}^{i,j})\right)\lambda_p^E(st^{-1})\xi_j,\ \eta_i\right\ra
 \\&=\frac{1}{|F|}\sum_{i,j=1}^{k}\sum_{s,t\in F}\left\la \iota(a_{s,t}^{i,j})\lambda_{p}^E(t^{-1})\xi_j,\ \lambda_q^{E^*}(s^{-1})\eta_i\right\ra\\
&=\sum_{i,j=1}^k\left\la \sum_{s,t\in F}(\sigma_{s,t}\otimes \iota(a_{s,t}^{i,j})) \widetilde{\xi}_j,\ \widetilde{\eta}_i\right\ra\\
&=\left\la\left(\sum_{i,j}^k e_{i,j}\otimes\left(\sum_{s,t\in F}\sigma_{s,t}\otimes \iota\left(a_{s,t}^{i,j}\right)\right)\right)\widetilde{\xi},\ \widetilde{\eta}\right\ra.\end{aligned}$$
Note that if $\|\xi\|_p\leq 1$ and $\|\eta\|_q\leq 1$, then $\|\widetilde{\xi}\|_p\leq 1$ and $\|\widetilde{\eta}\|_q\leq 1$. This implies that $$\left\|\mathrm{id}_{M_k^p}\otimes \psi(T)\right\|\leq\left\|\sum_{i,j}^k e_{i,j}\otimes\left(\sum_{s,t\in F}\sigma_{s,t}\otimes \iota(a_{s,t}^{i,j})\right)\right\|.$$


{ Since the action $\alpha$ is $p$-completely isometric, by the definition of $\iota$, it follows that $$\left\|\sum_{i,j}^k e_{i,j}\otimes\left(\sum_{s,t\in F}\sigma_{s,t}\otimes \iota(a_{s,t}^{i,j})\right)\right\|=\left\|\sum_{i,j}^k e_{i,j}\otimes\left(\sum_{s,t\in F}\sigma_{s,t}\otimes a_{s,t}^{i,j}\right)\right\|=\|T\|.$$}
Then we deduce that $\|\mathrm{id}_{M_k^p}\otimes \psi(T)\|\leq\|T\|$.
Hence $\psi$ is $p$-completely contractive.

{\it Step 3.} The verification of the $p$-nuclearity of $F^p_\lambda(G,A,\alpha)$.

Since $sF\Delta F=(sF\setminus(sF\cap F))\cup(F\setminus(sF\cap F))$, it follows that $|F\cap sF|=\frac{2|F|-|sF\bigtriangleup F|}{2}$.
For $i=1,2,\cdots,n$, we assume that $f_i=\sum_{t\in F_3}a^i_t\delta_t$.
Then we have
$$\begin{aligned}
&\|\psi\circ\varphi(x_i)-x_i\|\\
\leq&\|\psi\circ\varphi(x_i)-\psi\circ\varphi(f_i)\|+\|\psi\circ\varphi(f_i)-
\iota\rtimes\lambda_p^E(f_i)\|
 +\|\iota\rtimes\lambda_p^E(f_i)-x_i\|\\
\leq&\left\|\psi\left(\sum_{s\in F_3}\sum_{r\in F\cap sF}\sigma_{r,s^{-1}r}\otimes\alpha_{r^{-1}}(a_s^i)\right)-\sum_{s\in F_3}\iota(a_{s}^i)\lambda_p^E(s)\right\|+\frac{2\varepsilon}{3}
 \\=&
 \left\|\sum_{s\in F_3}\left[\sum_{r\in F\cap sF}\frac{1}{|F|}\iota(a_s^i)\lambda_p^E(s)-\iota(a_s^i)\lambda_p^E(s)\right]\right\|+\frac{2\varepsilon}{3}
  \\=&\left|\frac{|F\cap sF|}{|F|}-1\right|\cdot \left\|\sum_{s\in F_3}\iota(a_s^i)\lambda_p^E(s)\right\|+\frac{2\varepsilon}{3}
  \\<& \frac{\varepsilon}{3M}\cdot M+\frac{2\varepsilon}{3}=\varepsilon. \end{aligned}$$
Since $A$ is $p$-nuclear, it follows from Lemma \ref{tensor} that so is $M_{F}^p\otimes_p A$.
By Lemma \ref{matrix}, we conclude that $F^p_\lambda(G,A,\alpha)$ is $p$-nuclear.\end{proof}

Next we employ an example appeared in \cite[page 15]{PlpsOpenQues} to show that an amenable $L^2$-operator algebra may not be $2$-nuclear.

{\begin{eg}\label{Eg:amena}
Let $z=\mathrm{diag}(1,2)\in M_2$. We define a norm on $M_2$ by $\|a\|=\max\{\|a\|_2,\|zaz^{-1}\|_2\}$ for $a\in M_2$. Here $\|\cdot\|_2$ denotes the operator norm on $l^2_2$. This makes $M_2$ a Banach algebra. It is easy to check that the map $$\pi: M_2\rightarrow \mathcal{B}(l_4^2), \ \ \ a\mapsto\mathrm{diag}(a,zaz^{-1})$$ is an isometric representation of $M_2$. Hence $M_2$ is an $L^2$-operator algebra. Since $\pi(M_2)$ is isomorphic to $M_2^2$, it follows from \cite[Proposition 2.3.1]{Runde} that $M_2$ is amenable. Noting that $\pi(M_2)$ is not a selfadjoint subalgebra of $\mathcal{B}(l_4^2)$, it follows from \cite[Corollary 7.1.8]{Blecher and Le Merdy} that $\pi(M_2)$ is not $2$-nuclear.
\end{eg}}

\section{Proofs of corollaries}

In this section, we study $p$-nuclearity for $L^p$-Cuntz algebras and rotation $L^p$-operator algebras, and give the proofs
of Corollaries \ref{C:3} and \ref{2}.

We first recall an important notion which will be used in the proofs of Corollaries \ref{C:3} and \ref{2}.

\begin{defn}[{\cite[Definition 4.1]{Lp AF}}]\label{matrix norm}
Let $p\in [1,\infty)$ and $A$ be a separable $L^p$-operator algebra. We say that $A$ has
{\it unique $L^p$-operator matrix norms} if whenever $(X,\mathcal{B},\mu)$ and $(Y,\mathcal{C},\nu)$ are $\sigma$-finite measure spaces such that $L^p(X,\mu)$ and $L^p(Y,\nu)$ are separable, $\pi:A\rightarrow \mathcal{B}(L^p(X,\mu))$ and $\sigma:A\rightarrow \mathcal{B}(L^p(Y,\nu))$ are isometric representations, then $\sigma\circ\pi^{-1}:\pi(A)\rightarrow \sigma(A)$ is $p$-completely isometric.
\end{defn}

To prove  Corollary \ref{C:3}, we recall the definition of $L^p$-Cuntz algebras.

\begin{defn}[{\cite[Definition 1.1]{Phillips Lp Cuntz}}]
Let $d\in \{2,3,4,\cdots\}$. The {\it Leavitt algebra} $L_d$ is defined to be the universal complex associate algebra on generators $s_1,s_2,\cdots,s_d,t_1,t_2,\cdots,t_d$ satisfying $\sum_{j=1}^ns_jt_j=1 $ and
\[t_js_k =\begin{cases}
1,& j=k,\\
0,& j\ne k.
\end{cases}\]
\end{defn}

\begin{defn}[{\cite[Definition 8.8]{Phillips Lp Cuntz}}]\label{D:Cunz}
Let $d\in\{2,3,4,\cdots\}$ { and $p\in [1,\infty)$.} The $L^p$-Cuntz algebra $\mathcal{O}_d^p$ is defined to be the completion of $L_d$ in the norm $a\mapsto\|\rho(a)\|$ for any spatial representation $\rho$ of $L_d$ on some $L^p(X,\mu)$ for a $\sigma$-finite measure space $(X,\mathcal{B},\mu)$. Note that $\mathcal{O}_d^2$ is exactly the usual Cuntz algebra $\mathcal{O}_d$.
\end{defn}

Now we are going to show that the $L^p$-Cuntz algebra $\mathcal{O}_d^p$ is $p$-nuclear.

\begin{proof}[Proof of Corollary \ref{C:3}]
By \cite[Theorem 7.17]{N. C. Phillips Lp}, the stabilized $L^p$-Cuntz algebra $\overline{M}_{\mathbb{Z}\geq 0}^p\otimes_p \mathcal{O}_d^p$ is isometrically isomorphic to $F^p_\lambda(\mathbb{Z},B,\beta)$,
where $B$ is a stabilized  spatial $L^p$-UHF algebra and $\beta$ is an isometric action of $\mathbb{Z}$ on $B$ (see \cite[Notation 7.6]{N. C. Phillips Lp}). By \cite[Theorem 4.6]{Phillips look like}, the spatial $L^p$-UHF algebra is $p$-nuclear. Hence it follows from Lemma \ref{tensor} that $B$ is $p$-nuclear.

Let $\mathrm{id}_B$ be the identity representation of $B$, with $(\widetilde{\mathrm{id}_B},v)$ being its associated regular covariant representation. Since $B$ has unique $L^p$-operator matrix norms \cite[Corrollary 4.4 \& Proposition 6.4]{Lp AF}, it follows from \cite[Lemma 2.5]{WZ} that the integrated form $\widetilde{\mathrm{id}_B}\rtimes v$ is an isometric representation of $F^p_\lambda(\mathbb{Z},B,\beta)$.
From the definition of $\beta$ (see \cite[Notation 7.6]{N. C. Phillips Lp}), one can check that $\beta$ is $p$-completely isometric.
Then, by Theorem \ref{T:main}, $F^p_\lambda(\mathbb{Z},B,\beta)$ is $p$-nuclear and so is $\overline{M}_{\mathbb{Z}\geq 0}^p\otimes \mathcal{O}_d^p$. Using Lemma \ref{tensor} again, we deduce that $\mathcal{O}_d^p$ is $p$-nuclear.
\end{proof}

The rest of this section is devoted to showing that both $A^p_{\theta}$ and $F^p(\mathbb{Z},S^1,\alpha_{\theta})$ are $p$-nuclear. We first recall the definition of rotation $L^p$-operator algebras.

\begin{defn} [{\cite[Definition 1.1]{GarThiRotation14}}]
Let $\theta\in \mathbb{R}$. We define the {\it (algebraic) rotation algebra $R_\theta$} to be the universal associative complex algebra on generators $u$ and $v$ satisfying the relations \begin{enumerate}
\item[(i)] $u, v$ are both invertible, and
\item[(ii)] $uv=e^{2\pi \textrm{i}\theta} vu$.
 \end{enumerate}
\end{defn}

\begin{defn} [{\cite[Definition 1.5]{GarThiRotation14}}]
Let $\theta\in \mathbb{R}$ and let $E$ be a Banach space. An {\it isometric representation} of $R_\theta$ on $E$ is a representation $\rho:A\rightarrow \mathcal{B}(E)$ such that $\rho(u)$ and $\rho(v)$ are isometries of $E$.
\end{defn}

\begin{defn} [{\cite[Definition 1.9]{GarThiRotation14}}]\label{D:rota}
Let $\theta\in\mathbb{R}$ and $p\in[1,\infty)$. We denote by $\mathrm{Iso}_p$ the class of isometric representations of $R_\theta$ on $\sigma$-finite $L^p$-spaces.
Define a norm $\|\cdot\|_p$ on $R_\theta$ by
$$\|a\|_p=\sup\{\|\rho(a)\|:\rho\in \mathrm{Iso}_p\}.$$
The {\it rotation $L^p$-operator algebra $A_\theta^p$} is the completion of $R_\theta$ with respect to $\|\cdot\|_p$.
When $\theta$ is an irrational number, $A_\theta^p$ is called the {\it irrational rotation $L^p$-operator algebra}.
\end{defn}

Next we shall represent $A_\theta^p$ as a $L^p$-operator crossed product.
For $\theta\in \mathbb{R}$, we define an invertible isometry $W_\theta$ on $l^p(\mathbb{Z})$ as
\[ W_\theta\left(\sum_{j\in\mathbb{Z}}a_je_j\right)=\sum_{j\in\mathbb{Z}}a_je^{-2j\pi  \textup{i}\theta}e_j,\]
where $\{e_j\}_{j\in\mathbb{Z}}$ is the canonical basis of $l^p(\mathbb{Z})$.
We define $\beta_\theta(f)=W_\theta fW_\theta^{-1}$ for $f\in F^p_\lambda(\mathbb{Z})$.
Clearly, $\beta_\theta$ is a linear isometry.
Let $U$ be the bilateral shift operator on $l^p(\mathbb{Z})$. One can check that
$\beta_\theta(U)=e^{-2\pi \textup{i}\theta}U$. It follows that
$\beta_\theta\in \mathrm{Aut}(F^p_\lambda(\mathbb{Z}))$.
This induces an action of $\mathbb{Z}$ on $F_\lambda^p(\mathbb{Z})$ given by $n\longmapsto \beta_\theta^n=\beta_{n\theta}$, still denoted by $\beta_\theta$.
Since $F^p(\mathbb{Z})$ is isometrically isomorphic to $F^p_\lambda(\mathbb{Z})$, we identify $F^p(\mathbb{Z})$ with $F^p_\lambda(\mathbb{Z})$.
By \cite[Theorem 2.2]{GarThiRotation14}, $A_\theta^p$ is isometrically isomorphic to $F^{p}(\mathbb{Z},F^p(\mathbb{Z}),\beta_\theta)$.

Recall that $\alpha_\theta\in \textup{Aut}(C(S^1))$ is given by  $\alpha_{\theta}(f)(z)=f(e^{-2\pi \textrm{i}\theta} z)$ for $z\in S^1$.
One can check that $\alpha_\theta(\Gamma(f))=\Gamma(\beta_\theta(f))$, where $\Gamma: F^p(\mathbb{Z})\rightarrow C(S^1)$ is the Gelfand transform of $F^p(\mathbb{Z})$.
Clearly, $\Gamma$ induces a canonical injective homomorphism from $F^p(\mathbb{Z},F^p(\mathbb{Z}),\beta_\theta)$ to $ F^p(\mathbb{Z},S^1,\alpha_\theta)$, which is surjective precisely when $p=2$ (see \cite{GarThiRotation14}).

{\begin{lem}\label{ci}
Let $p\in[1,\infty)$ and $\theta\in\bR$. Then the action $\beta_\theta$ of $\mathbb{Z}$ on $F^p(\mathbb{Z})$ is $p$-completely isometric.
\end{lem}

\begin{proof}
Let $n$ be a positive integer.
If $\{f_{i,j}\}_{1\leq,i,j\leq n}\subset F^p(\mathbb{Z})$, then we have
$$\begin{aligned}
&\left\|\mathrm{id}_{M_n^p}\otimes \beta_\theta\left(\sum_{i,j=1}^ne_{i,j}\otimes f_{i,j}\right)\right\|
= \left\|\sum_{i,j=1}^ne_{i,j}\otimes W_\theta f_{i,j}W_\theta^{-1}\right\|
\\=&\left\|(I_{M_n^p}\otimes W_\theta)\left(\sum_{i,j=1}^n e_{i,j}\otimes f_{i,j}\right)(I_{M_n^p}\otimes W_\theta^{-1})\right\|
\\=&\left\|\sum_{i,j=1}^ne_{i,j}\otimes f_{i,j} \right\|,
\end{aligned}$$
where $I_{M_n^p}$ is the unit of $M_n^p$.
Hence $\beta_\theta$ is $p$-completely isometric.
\end{proof}}

To prove Corollary \ref{2}, {we need an extra lemma.}

Let $X$ be a compact metric space. We claim that $C(X)$ is an $L^p$-operator algebra for any $p\in[1,\infty)$. In fact, since $C(X)$ is separable (see \cite[page 136, Exercise 2]{Conway}) and, by \cite[3.7.2]{pedersen}, there is a positive linear functional on $C(X)$ which is separating for $C(X)$. It follows from the Riesz representation theorem that there exists a finite Borel measure $\mu$ on $X$ such that $\mu(U)>0$ for every nonempty open set $U\subset X$. Clearly, the map sending $f\in C(X)$ to the multiplication operator $M_f$ on $L^p(X,\mu)$ is an isometric representation of $C(X)$ on $L^p(X,\mu)$ for $p\in[1,\infty)$. This proves the claim.


{ It is well known that $C(X)$ is 2-nuclear or, equivalently, $C(X)$ is nuclear  as a $C^*$-algebra (see \cite[Proposition 2.4.2]{Brown and Ozawa}).
Since the norms on $C(X)$ are the same when considered as $C^*$-algebras or $L^p$-operator
algebras, using a similar method, one can prove the following result.}

\begin{lem}\label{C(X)}
If $X$ is a compact metric space, then $C(X)$ is $p$-nuclear for all $p\in [1,\infty)$.
\end{lem}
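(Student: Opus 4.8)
The plan is to establish $p$-nuclearity of $C(X)$ directly from the definition, by exhibiting for each finite subset $F\subset C(X)$ and each $\varepsilon>0$ a pair of $p$-completely contractive maps $\varphi:C(X)\to M_n^p$ and $\psi:M_n^p\to C(X)$ with $\|\psi\circ\varphi(f)-f\|<\varepsilon$ for $f\in F$. Since $C(X)$ is an abelian $C^*$-algebra, it is nuclear, and the classical completely positive approximation property provides contractive positive maps into $M_n$; the real content is to check that these maps are $p$-completely contractive when $M_n$ is replaced by $M_n^p=\mathcal{B}(l^p(\{1,\dots,n\}))$ and when we use the fixed isometric representation $f\mapsto M_f$ on $L^p(X,\mu)$.

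The concrete construction I would use is the standard partition-of-unity one. Using compactness of $X$ and uniform continuity of the functions in $F$, choose a finite open cover $\{U_1,\dots,U_n\}$ of $X$ with each $U_i$ of small diameter, a subordinate partition of unity $\{h_1,\dots,h_n\}$ (so $h_i\ge 0$, $\sum_i h_i=1$, $\operatorname{supp} h_i\subset U_i$), and points $x_i\in U_i$. Define $\varphi:C(X)\to M_n^p$ by $\varphi(f)=\operatorname{diag}(f(x_1),\dots,f(x_n))$, the diagonal matrix, and $\psi:M_n^p\to C(X)$ by $\psi([a_{ij}])=\sum_{i=1}^n a_{ii}\,h_i$ — i.e. $\psi$ reads off the diagonal and reassembles with the partition of unity. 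The approximation estimate $\|\psi\circ\varphi(f)-f\|_\infty=\|\sum_i f(x_i)h_i-f\|_\infty\le\max_i\sup_{x\in U_i}|f(x_i)-f(x)|<\varepsilon$ is routine once the cover is fine enough relative to $F$.

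The main obstacle — and the step deserving care — is verifying the two maps are $p$-completely contractive, i.e. that $\|\mathrm{id}_{M_k^p}\otimes\varphi\|\le 1$ and $\|\mathrm{id}_{M_k^p}\otimes\psi\|\le 1$ for all $k$. For $\varphi$: a matrix $[f_{ij}]\in M_k^p\otimes_p C(X)$ acts on $l^p(\{1,\dots,k\})\otimes_p L^p(X,\mu)$, and $\mathrm{id}_{M_k^p}\otimes\varphi([f_{ij}])$ is the block matrix $[\operatorname{diag}_\ell(f_{ij}(x_\ell))]$; since evaluation at a point $x_\ell$ is, after rearranging tensor factors, a compression of $[M_{f_{ij}}]$ to the fibre over $x_\ell$ (the point masses embed isometrically, as $\mu$ has full support so we use a net approximating $\delta_{x_\ell}$, or rather compress to a small neighbourhood and take limits), its norm is dominated by $\|[M_{f_{ij}}]\|=\|[f_{ij}]\|$. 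For $\psi$: write $\mathrm{id}_{M_k^p}\otimes\psi([\,[a^{(i',j')}_{ij}]\,])=\sum_i h_i\otimes[a^{(i',j')}_{ii}]$ as a function $x\mapsto\sum_i h_i(x)\,[a^{(i',j')}_{ii}]_{i',j'}$ valued in $M_k^p$; at each $x$ this is a convex combination $\sum_i h_i(x)$ of the diagonal sub-blocks $[a^{(i',j')}_{ii}]_{i',j'}$, each of which is a compression of the original matrix acting on $l^p(\{1,\dots,k\})\otimes_p l^p(\{1,\dots,n\})$ (restricting to the $i$-th summand of the $l^p(n)$ factor), so has norm $\le\|[\,[a^{(i',j')}_{ij}]\,]\|$; multiplying by the scalar multiplication operator $M_{h_i}$ with $0\le h_i\le 1$ and $\sum h_i=1$ keeps the sup-over-$x$ norm $\le\|[\,[a^{(i',j')}_{ij}]\,]\|$, using that $\|\sum_i M_{h_i}\otimes I \cdot B_i\|\le\|B\|$ for such a partition (this is where positivity and $\sum h_i = 1$ enter — it is the $L^p$ analogue of the fact that plugging a partition of unity into a matrix is completely contractive). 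Once both bounds are in hand, Lemma~\ref{matrix} is not even needed: the definition of $p$-nuclearity is met directly.
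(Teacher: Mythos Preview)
Your proposal is essentially the paper's own argument: both pick a fine open cover with sample points, define $\varphi$ by evaluation at those points (landing in the diagonal $\mathbb{C}^n\subset M_n^p$) and $\psi$ by reassembling against a subordinate partition of unity, and verify $p$-complete contractivity using the identification $\|[f_{ij}]\|_{M_k^p\otimes_p C(X)}=\sup_{x\in X}\|[f_{ij}(x)]\|_{M_k^p}$ together with the convex-combination bound $\|\sum_i h_i(x)B_i\|\le\max_i\|B_i\|$. Your detour through nets approximating $\delta_{x_\ell}$ and compressions to small neighbourhoods is unnecessary once that sup-norm identification is in hand---the paper simply uses it directly---and your $\psi$ factors through the paper's $\psi$ via the (completely contractive) diagonal projection $M_n^p\to\mathbb{C}^n$.
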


Next we give the proof of Corollary \ref{2}.

\begin{proof}[Proof of Corollary \ref{2}]
First, we prove the $p$-nuclearity of $F^p(\mathbb{Z},S^1,\alpha_\theta)$.

Let $\mu$ be the normalized arc-length measure on $S^1$, and
let $\pi_0:C(S^1)\rightarrow \mathcal{B}(L^p(S^1,\mu))$ be the canonical isometric representation given by $f\mapsto M_f$ with $(\pi,v)$ being its associated regular covariant representation. By \cite[Proposition 4.6]{Lp AF}, $C(S^1)$ has unique $L^p$-operator matrix norms. Then the action $\alpha_\theta$ is $p$-completely isometric (see Remark \ref{completely isometric} (ii)), and the integrated form $\pi\rtimes v$ is an isometric representation of $F^p_\lambda(\mathbb{Z},S^1,\alpha_\theta)$ (see \cite[Lemma 2.5]{WZ}). By  Theorem \ref{T:main} and Lemma \ref{C(X)}, the reduced $L^p$-operator crossed product $F^p_\lambda(\mathbb{Z},S^1,\alpha_\theta)$ is $p$-nuclear. Since $\mathbb{Z}$ is amenable, it follows that $F^p(\mathbb{Z},S^1,\alpha_\theta)$ is also $p$-nuclear.

The rest is devoted to proving the $p$-nuclearity of $A^p_\theta$.
By the discussion after Definition \ref{D:rota}, we need only do this for $F^p(\mathbb{Z},F^p(\mathbb{Z}),\beta_\theta)$.

{  Let $\iota_0$ be the canonical isometric representation of $F^p(\mathbb{Z})$ on $l^p(\mathbb{Z})$ with $(\iota,w)$ being its associated regular covariant representation.

{\it Claim.} The integrated form $\iota\rtimes w$ is an isometric representation of the $L^p$-operator algebra $F^p_\lambda(\mathbb{Z},F^p(\mathbb{Z}),\beta_\theta)$.

We choose an isometric representation $\varphi_0$ of $F^p(\mathbb{Z})$ on a $\sigma$-finite $L^p$-space $E$ with $(\varphi, u)$ being its associated regular covariant representation.
Let $\mathrm{id}$ be the trivial action of $\mathbb{Z}$ on $M_n^p$. Since $\mathbb{Z}$ is amenable, it follows that $F^p(\mathbb{Z},M_n^p,\mathrm{id})$ is isometrically isomorphic to $F^p_\lambda(\mathbb{Z},M_n^p,\mathrm{id})$. Since $M_n^p$ has unique $L^p$ operator matrix norms and the action of $\mathbb{Z}$ on $M_n^p$ is trivial, by \cite[Lemma 2.6]{WZ}, it follows that $F^p_\lambda(\mathbb{Z},M_n^p,\mathrm{id})$ is isometrically isomorphic to $M_n^p\otimes_p F^p(\mathbb{Z})$.
Then we deduce that $F^p(\mathbb{Z},M_n^p,\mathrm{id})$ is isometrically isomorphic to $M_n^p\otimes_p F^p(\mathbb{Z})$.
By the universal property of $F^p(\mathbb{Z},M_n^p,\mathrm{id})$, we have $$\left\|\sum_{i,j=1}^n e_{i,j}\otimes \iota_0(a_{i,j})\right\|\geq \left\|\sum_{i,j=1}^n e_{i,j}\otimes \varphi_0(a_{i,j})\right\|$$ for all $[a_{i,j}]\in M_n^p\otimes_p F^p(\mathbb{Z})$. As in the proof of \cite[Lemma 2.6]{WZ}, one can show that $\|\iota\rtimes w(f)\|\geq \|\varphi\rtimes u(f)\|$ for $f\in C_c(\mathbb{Z},F^p(\mathbb{Z}),\beta_\theta)$. From the definition of reduced $L^p$-operator crossed products, one can see the claim.}

By Lemma \ref{ci}, the action $\beta_\theta$ is $p$-completely isometric. By \cite[Proposition 5.1 (a)]{an}, $F^p(\mathbb{Z})$ is $p$-nuclear.
Then, from Theorem \ref{T:main}, we deduce that the reduced $L^p$-operator crossed product $F^p_\lambda(\mathbb{Z},F^p(\mathbb{Z}),\beta_\theta)$ is $p$-nuclear. Since $\mathbb{Z}$ is amenable, it follows that $F^p(\mathbb{Z},F^p(\mathbb{Z}),\beta_\theta)$ is also $p$-nuclear. \end{proof}

\subsection*{Acknowledgements}
The authors wish to thank the referee for several helpful, constructive suggestions concerning the manuscript.

%
%




\subsection*{Funding}
This work was supported by the National Natural Science Foundation of China
(Grant numbers 12201240, 12171195).

\end{document}